\newtheorem{theorem}{Theorem}[section]
\newtheorem{lemma}[theorem]{Lemma}
\newtheorem{proposition}[theorem]{Proposition}
\newtheorem{corollary}[theorem]{Corollary}
\theoremstyle{definition}
\newtheorem{definition}[theorem]{Definition}
\theoremstyle{remark}
\newtheorem{remark}[theorem]{Remark}
\numberwithin{equation}{section}
\def\rnum#1{\expandafter{\romannumeral #1}} 
\def\Rnum#1{\uppercase\expandafter{\romannumeral #1}}
\def\~#1{\widetilde #1}
\def\({\left(}
\def\){\right)}
\def\<{\langle}
\def\>{\rangle}
\begin{document}

% \title[short text for running head]{full title}
\title[Nonlinear Schr\"odinger equation]{Blow-up solutions for non-scale-invariant nonlinear Schr\"odinger equation in one dimension}

%    Only \author and \address are required; other information is
%    optional.  Remove any unused author tags.

%    author one information
% \author[short version for running head]{name for top of paper}
\author{Masaru Hamano}
\address{Department of Mathematics, Graduate School of Science and Engineering Saitama University, Shimo-Okubo 255, Sakura-ku, Saitama-shi, Saitama 338-8570, Japan}
%\curraddr{}
\email{m.hamano.733@ms.saitama-u.ac.jp\ /\ ess70116@mail.saitama-u.ac.jp}
%\thanks{}

%    author two information
\author{Masahiro Ikeda}
\address{Department of Mathematics, Faculty of Science and Technology, Keio University, 3-14-1 Hiyoshi, Kohoku-ku, Yokohama, 223-8522, Japan/Center for Advanced Intelligence Project, Riken, Japan}
%\curraddr{}
\email{masahiro.ikeda@riken.jp\ /\ masahiro.ikeda@keio.jp}
%\thanks{}

%    author three information
\author{Shuji Machihara}
\address{Department of Mathematics, Graduate School of Science and Engineering Saitama University, Shimo-Okubo 255, Sakura-ku, Saitama-shi, Saitama 338-8570, Japan}
%\curraddr{}
\email{machihara@rimath.saitama-u.ac.jp}
%\thanks{}

%	\date
%\date{, 2019, and, in revised form,}

%    \subjclass is required.
%\subjclass[2010]{35Q55, 35J60}

%	\dedicatory

%\dedicatory{This paper is dedicated to our advisors.}

\keywords{Nonlinear Schr\"odinger equation, Linear potential, Blow-up}

%    "Communicated by" -- provide editor's name; required.
%\commby{}

%    Abstract is required.
\begin{abstract}
In this paper, we consider the mass-critical nonlinear Schr\"odinger equation in one dimension.
Ogawa--Tsutsumi \cite{OgaTsu91} proved a blow-up result for negative energy solution by using a scaling argument for initial data.
By the reason, the method cannot be used to an equation with a linear potential.
So, we modify the proof and get that for the equation with the linear potential.
\end{abstract}

\maketitle

\tableofcontents

%    Text of article.

\section{Introduction}

\subsection{Nonlinear Schr\"odinger equation}

We consider the following mass-critical nonlinear Schr\"odinger equations :
\begin{equation}
	i\partial_t u + \partial_x^2 u - Vu
		= -|u|^4u,\qquad (t,x) \in \mathbb{R} \times \mathbb{R}. \tag{NLS$_V$} \label{NLS}
\end{equation}
An unknown function $u(t,x) : \mathbb{R} \times \mathbb{R} \longrightarrow \mathbb{C}$ is a solution to \eqref{NLS}.
In particular, we deal with the Cauchy problem of \eqref{NLS} with initial data
\begin{equation}
	u(0,x) = u_0(x), \qquad x \in \mathbb{R}. \tag{IC} \label{IC}
\end{equation}

\begin{definition}[Solution]
Let $I \subset \mathbb{R}$ be a nonempty time interval including 0.
We say that a function $u : I \times \mathbb{R} \longrightarrow \mathbb{C}$ is a solution to \eqref{NLS} with \eqref{IC} on $I$ if $u \in (C_t \cap L_{t,\text{loc}}^\infty)(I;H_x^1(\mathbb{R}))$ and the Duhamel formula
\begin{align*}
	u(t,x)
		= e^{it\partial_x^2}u_0(x) + i\int_0^te^{i(t-s)\partial_x^2}(|u|^{p-1}u - Vu)(s,x)ds
\end{align*}
holds for any $t\,(\in I)$.
\end{definition}

The equation \eqref{NLS} with $V = 0$
\begin{align}\tag{NLS$_0$} \label{NLS0}
	i\partial_t u + \partial_x^2 u
		= -|u|^4u, \qquad (t,x) \in \mathbb{R} \times \mathbb{R}
\end{align}
is invariant for the following scaling :
\begin{align}
	u(t,x)
		\mapsto u_{[\lambda]}(t,x) := \lambda^\frac{1}{2}u(\lambda^2t,\lambda x), \qquad (\lambda > 0). \label{001}
\end{align}
From the transformation \eqref{001}, the initial data $u_0$ change to
\begin{align}
	u_0
		\mapsto u_{0,\{\lambda\}} := \lambda^\frac{1}{2}u_0(\lambda x), \qquad (\lambda > 0). \label{002}
\end{align}
Since $\|u_{0,\{\lambda\}}\|_{L^2} = \|u_0\|_{L^2}$ holds, \eqref{NLS} is called $L^2$-critical or mass-critical (see \eqref{019} below for the definition of mass).

\begin{theorem}[Local well-posedness of \eqref{NLS0}, \cite{BaiCazFig77, GinVel78, Kat87}]\label{Local well-posedness}
Let $V = 0$.
For any $u_0 \in H^1(\mathbb{R})$, there exist $T_\text{min} \in [-\infty,0)$ and $T_\text{max} \in (0,\infty]$ such that \eqref{NLS0} with \eqref{IC} has a unique solution
\begin{align*}
	u
		& \in (C_t \cap L_{t,\text{loc}}^\infty)((T_\text{min},T_\text{max}) ; H_x^1(\mathbb{R})).
\end{align*}
For each compact interval $I \subset (T_\text{min},T_\text{max})$, the mapping $H^1(\mathbb{R}) \ni u_0 \mapsto u \in C_t(I;H_x^1(\mathbb{R}))$ is continuous.
Moreover, the solution $u$ has the following blow-up alternative:
If $T_\text{min} > -\infty$ (resp. $T_\text{max} < \infty$), then
\begin{align*}
	\lim_{t \searrow T_\text{min}\,(resp.\,t \nearrow T_\text{max})}\|u(t)\|_{H_x^1}
		= \infty.
\end{align*}
Furthermore, the solution $u$ preserves its mass $M[u(t)]$ and energy $E_V[u(t)]$ with respect to time $t$, where they are defined as follows:
\begin{align}
	\text{(Mass) }&\ \ M[f]
		:= \|f\|_{L^2}^2, \label{019} \\
	\text{(Energy) }&\ \ E_V[f]
		:= \frac{1}{2}\|\partial_xf\|_{L^2}^2 + \frac{1}{2}\int_{\mathbb{R}}V(x)|f(x)|^2dx - \frac{1}{6}\|f\|_{L^6}^6. \notag
\end{align}
\end{theorem}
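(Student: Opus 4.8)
The strategy I would follow is the energy (contraction-mapping) method, exploiting that in one dimension $H^1(\mathbb{R})$ is a Banach algebra and that the nonlinearity $N(u):=|u|^4u=u^3\bar u^2$ is a polynomial in $(u,\bar u)$, hence smooth; the $L^2$-critical scaling is not needed at the level of the $H^1$ local theory, the quintic power being energy-subcritical in one dimension. \textbf{Local existence and uniqueness.} First I would record that $N$ is locally Lipschitz from $H^1_x$ to itself,
$$\|N(u)-N(v)\|_{H^1_x}\lesssim\big(\|u\|_{H^1_x}^4+\|v\|_{H^1_x}^4\big)\|u-v\|_{H^1_x},$$
which follows from the algebra property $\|fg\|_{H^1_x}\lesssim\|f\|_{H^1_x}\|g\|_{H^1_x}$ and the Sobolev embedding $H^1_x\hookrightarrow L^\infty_x$. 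Writing $\Phi(u)(t):=e^{it\partial_x^2}u_0+i\int_0^t e^{i(t-s)\partial_x^2}N(u)(s)\,ds$ and using that $e^{it\partial_x^2}$ is unitary on $H^1_x$, I obtain for $u,v$ in the ball $B_R:=\{w\in C([-T,T];H^1_x):\sup_t\|w(t)\|_{H^1_x}\le R\}$ with $R:=2\|u_0\|_{H^1_x}$ the bounds $\sup_t\|\Phi(u)(t)\|_{H^1_x}\le\|u_0\|_{H^1_x}+CTR^5$ and $\sup_t\|\Phi(u)(t)-\Phi(v)(t)\|_{H^1_x}\le CTR^4\sup_t\|u(t)-v(t)\|_{H^1_x}$. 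Choosing $T\sim\|u_0\|_{H^1_x}^{-4}$ makes $\Phi$ a contraction of $B_R$ into itself, and Banach's fixed point theorem yields a unique solution $u\in C([-T,T];H^1_x)$; uniqueness in the full class of the definition follows by a continuity-in-time argument placing any two solutions in a common ball on a short subinterval and iterating.

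\textbf{Maximal interval and blow-up alternative.} Because the existence time above depends only on $\|u_0\|_{H^1_x}$, I would define $(T_\text{min},T_\text{max})$ as the union of all time intervals on which a solution exists and glue solutions together by uniqueness. If $T_\text{max}<\infty$ but $\limsup_{t\nearrow T_\text{max}}\|u(t)\|_{H^1_x}=:M_0<\infty$, then starting from any $t_0$ close to $T_\text{max}$ the local theory produces a solution on $[t_0,t_0+\tau]$ with $\tau\sim M_0^{-4}$ independent of $t_0$, extending $u$ beyond $T_\text{max}$ and contradicting maximality; hence the stated blow-up alternative holds, and symmetrically at $T_\text{min}$. \textbf{Continuous dependence} follows from the same Lipschitz estimates: if $u_0^{(n)}\to u_0$ in $H^1_x$, the corresponding solutions stay in a common ball on a common interval $I$ and the difference $w_n:=u^{(n)}-u$ obeys $\sup_{t\in I}\|w_n(t)\|_{H^1_x}\lesssim\|u_0^{(n)}-u_0\|_{H^1_x}$ by a Gronwall argument, which then propagates over any compact $I\subset(T_\text{min},T_\text{max})$ by covering it with finitely many short intervals.

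\textbf{Conservation laws.} Here is where I expect the real work. The formal identities — pairing \eqref{NLS0} with $\bar u$ and taking the imaginary part to get $\frac{d}{dt}M[u]=0$, and pairing with $\partial_t\bar u$ and taking the real part to get $\frac{d}{dt}E_0[u]=0$ — are only legitimate when $u$ is regular enough that $\partial_t u\in L^2_x$. I would therefore first approximate $u_0$ by $u_0^{(n)}\in H^2(\mathbb{R})$ and run the same contraction in $H^2_x$ (again an algebra in one dimension); this shows the solutions $u^{(n)}$ lie in $C(I;H^2_x)\cap C^1(I;L^2_x)$ on a common interval $I$, so the two conservation identities hold rigorously for each $u^{(n)}$. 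Finally I would let $n\to\infty$, invoking the continuous dependence just established together with the continuity of $M$ and $E_0$ on $H^1_x$ — the sextic term being controlled through $\|u\|_{L^6_x}^6\lesssim\|u\|_{H^1_x}^6$ by Sobolev embedding — to conclude that $M[u(t)]$ and $E_0[u(t)]$ are constant on $(T_\text{min},T_\text{max})$. The main obstacle is thus not the fixed-point scheme but this rigorous justification of conservation — especially of the energy — at $H^1_x$ regularity, which the regularization-and-limit argument overcomes.
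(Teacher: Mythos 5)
The paper contains no proof of this theorem: it is imported as a known result from \cite{BaiCazFig77, GinVel78, Kat87} (see also \cite{Caz03}), so there is no internal argument to compare yours against. Your proposal is the standard energy-method proof that those references (in the one-dimensional, energy-subcritical setting) carry out: since $H^1(\mathbb{R})$ is an algebra embedded in $L^\infty(\mathbb{R})$, a contraction in $C_tH^1_x$ with lifespan $T\sim\|u_0\|_{H^1_x}^{-4}$, gluing by uniqueness, the extension argument for the blow-up alternative, Lipschitz continuous dependence on compact intervals, and a regularization argument for the conservation laws is the correct and complete route; no Strichartz machinery is needed.

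Two points in your write-up have gaps, though both are closed by standard arguments. First, in the blow-up alternative you assume $\limsup_{t\nearrow T_\text{max}}\|u(t)\|_{H^1_x}<\infty$ and derive a contradiction; that proves only $\limsup=\infty$, whereas the theorem asserts $\lim=\infty$. You should instead assume $\liminf<\infty$, take a sequence $t_n\nearrow T_\text{max}$ with $\|u(t_n)\|_{H^1_x}\leq M_0$, and apply the local theory at some $t_n$ with $T_\text{max}-t_n<\tau(M_0)$; the same extension argument then yields the contradiction and the full limit. Second, and more substantively: when you approximate $u_0\in H^1$ by $u_0^{(n)}\in H^2$, the norms $\|u_0^{(n)}\|_{H^2}$ are in general unbounded as $n\to\infty$, so ``running the same contraction in $H^2$'' with the plain algebra bound $\||u|^4u\|_{H^2}\lesssim\|u\|_{H^2}^5$ gives lifespans $\sim\|u_0^{(n)}\|_{H^2}^{-4}$ that may shrink to zero; a \emph{common} interval $I$ does not follow as stated. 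What rescues the step is the tame estimate $\||u|^4u\|_{H^2}\lesssim\|u\|_{H^1}^4\|u\|_{H^2}$ (via Gagliardo--Nirenberg in one dimension), equivalently a Gronwall persistence-of-regularity argument on the Duhamel formula, which shows the $H^2$ lifespan is bounded below in terms of the $H^1$ norm alone and that the $H^2$ norm cannot blow up while the $H^1$ norm stays bounded. With that lemma inserted, the $u^{(n)}$ do lie in $C(I;H^2_x)\cap C^1(I;L^2_x)$ on a common interval, the identities $\frac{d}{dt}M[u^{(n)}]=0$ and $\frac{d}{dt}E_0[u^{(n)}]=0$ are rigorous there, and your limit via continuous dependence concludes correctly.
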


In the case $V = 0$, Ogawa--Tsutsumi \cite{OgaTsu91} removed the condition $xu_0 \in L^2(\mathbb{R})$ of a blow-up result in \cite{Gla77} by using the scaling \eqref{002} :

\begin{theorem}\label{Main theorem}
Let $V = 0$ and $u_0 \in H^1(\mathbb{R})$.
If $E_0[u_0] < 0$, then the solution $u$ to \eqref{NLS0} with \eqref{IC} blows up.
\end{theorem}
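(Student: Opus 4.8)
The plan is to replace the Glassey virial weight $|x|^2$, which requires the finiteness of $\int_{\mathbb{R}}|x|^2|u_0|^2\,dx$, by a globally bounded approximation so that the virial quantity is finite for every $u_0\in H^1(\mathbb{R})$. Fix an even function $\psi\in C^4(\mathbb{R})$ with $\psi(x)=x^2$ for $|x|\le1$, with $0\le\psi''\le2$ on $\mathbb{R}$, and with $\psi''(x)=0$ for $|x|\ge2$; in particular $\psi$ is bounded. For $R>0$ set $\psi_R(x):=R^2\psi(x/R)$, so that $\psi_R(x)=x^2$ on $|x|\le R$, $0\le\psi_R''\le2$, and $\psi_R''$ is supported in $\{R\le|x|\le2R\}$. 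I would then study the localized virial $V_R(t):=\int_{\mathbb{R}}\psi_R(x)|u(t,x)|^2\,dx$, which satisfies $0\le V_R(t)\le R^2\|\psi\|_{L^\infty}M[u_0]<\infty$ by conservation of mass.

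First I would differentiate $V_R$ twice in time. Using $\partial_t|u|^2=-2\partial_x\,\mathrm{Im}(\bar u\,\partial_x u)$ and integration by parts, one gets $V_R'(t)=2\int_{\mathbb{R}}\psi_R'\,\mathrm{Im}(\bar u\,\partial_x u)\,dx$, and a second differentiation yields the weighted virial identity
\begin{align*}
	V_R''(t)
		= 4\int_{\mathbb{R}}\psi_R''|\partial_x u|^2\,dx - \int_{\mathbb{R}}\psi_R''''|u|^2\,dx - \frac{4}{3}\int_{\mathbb{R}}\psi_R''|u|^6\,dx.
\end{align*}
Since $E_0$ is conserved, $16E_0[u_0]=8\|\partial_x u(t)\|_{L^2}^2-\tfrac{8}{3}\|u(t)\|_{L^6}^6$, and using $\psi_R''=2$ on $|x|\le R$ I would rewrite the identity as
\begin{align*}
	V_R''(t)
		= 16E_0[u_0] - 4\int_{\mathbb{R}}(2-\psi_R'')|\partial_x u|^2\,dx - \int_{\mathbb{R}}\psi_R''''|u|^2\,dx + \frac{4}{3}\int_{\mathbb{R}}(2-\psi_R'')|u|^6\,dx.
\end{align*}
Here the biharmonic term is harmless, since $|\psi_R''''|\le CR^{-2}$ gives $\bigl|\int\psi_R''''|u|^2\bigr|\le CR^{-2}M[u_0]$, and the kinetic error term has the favorable sign because $2-\psi_R''\ge0$.

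The main obstacle is the last term $\tfrac{4}{3}\int(2-\psi_R'')|u|^6$, which is nonnegative and thus works against us; it is supported in $\{|x|\ge R\}$, and is precisely the term that the exact-scaling virial avoids and that forces the present modification. To control it I would apply a Gagliardo--Nirenberg inequality localized to the exterior region, of the form $\int_{|x|\ge R}|u|^6\,dx\le C\bigl(\int_{|x|\ge R}|u|^2\,dx\bigr)^2\int_{|x|\ge R}|\partial_x u|^2\,dx + CR^{-2}M[u_0]^3$, so as to expose the exterior mass $\int_{|x|\ge R}|u|^2\,dx$ as a gain. The delicate point is that the accompanying factor $\int_{|x|\ge R}|\partial_x u|^2$ is not bounded a priori by the conservation laws, since proving blow-up is exactly proving that the gradient is unbounded; the resolution is to play this factor against the negative kinetic term $-4\int(2-\psi_R'')|\partial_x u|^2$ together with conservation of mass, so that for $R$ large enough one obtains, uniformly in $t$, the bound $V_R''(t)\le\tfrac12\cdot16E_0[u_0]=8E_0[u_0]<0$. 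Making the exterior contribution strictly dominated by the negative energy while sidestepping the a priori unboundedness of $\|\partial_x u(t)\|_{L^2}$ is where essentially all the work lies.

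Finally, I would conclude by a convexity argument. Once $V_R''(t)\le8E_0[u_0]<0$ holds on the maximal forward interval $[0,T_\text{max})$, integrating twice gives $V_R(t)\le V_R(0)+V_R'(0)t+4E_0[u_0]t^2$. The right-hand side tends to $-\infty$ as $t\to\infty$, contradicting $V_R(t)\ge0$; hence $T_\text{max}<\infty$, and by the blow-up alternative in Theorem \ref{Local well-posedness} the solution blows up.
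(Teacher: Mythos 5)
Your overall architecture is the same as the paper's: a virial weight equal to $x^2$ on $\{|x|\le R\}$, a rewriting of the localized virial identity around $16E_0[u_0]$, control of the exterior $L^6$ term by a Gagliardo--Nirenberg-type estimate absorbed into the negative kinetic term, and a convexity conclusion. However, there are genuine gaps, starting in the setup. Your weight cannot exist as specified: the constraints $\psi''=2$ on $|x|\le1$, $0\le\psi''\le2$, and $\psi''=0$ for $|x|\ge2$ force $\psi'(x)=\int_0^x\psi''(s)\,ds\ge2$ for all $x\ge1$, so $\psi$ grows linearly and is \emph{not} bounded. Then $V_R(t)=\int\psi_R|u|^2\,dx$ is finite only when $\int|x|\,|u(t,x)|^2\,dx<\infty$, which is exactly the hypothesis the theorem is supposed to remove (moreover $\psi_R'$ is not compactly supported, so the virial identity of Proposition \ref{Virial identity} does not apply to it). To get $\psi$ bounded with $\psi=x^2$ near the origin, $\psi''$ must become negative on part of the annulus so that $\psi'$ returns to $0$; this is precisely how the paper's $\zeta$ is built ($\zeta'<0$ on $1+1/\sqrt{3}<|s|<2$ and $\zeta=0$ for $|s|\ge2$), and the only sign the argument really needs is $\psi''\le2$, not $\psi''\ge0$. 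Second, even with a corrected weight, your absorption step does not close: the exterior Gagliardo--Nirenberg inequality produces the \emph{unweighted} gradient $\int_{|x|\ge R}|\partial_xu|^2$, while the available negative term is $-4\int(2-\psi_R'')|\partial_xu|^2$, whose weight vanishes continuously at $|x|=R$; no constant multiple of the former is dominated by the latter. The paper avoids this mismatch with the weighted $L^\infty$ lemma of Ogawa--Tsutsumi in Section \ref{Sec:Preliminary}, applied with $g_R=(2-\mathscr{X}''(\cdot/R))^{1/4}$: keeping the weight attached to $u$ yields exactly $\|g_R^2\partial_xu\|_{L^2}^2=\int(2-\mathscr{X}''(\tfrac{x}{R}))|\partial_xu|^2$, which the kinetic term absorbs, plus an $O(R^{-1})$ term involving $\partial_x(g_R^2)$; this is the content of Proposition \ref{Estimate of the virial}.

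The most serious gap is uniformity in time. After the Gagliardo--Nirenberg step, the coefficient multiplying the gradient is $\|u(t)\|_{L^2(|x|\ge R)}^4$, the exterior mass \emph{at time $t$}, which is not a conserved quantity. Conservation of mass only bounds it by $M[u_0]^2$, which need not be small, and taking $R$ large makes it small at $t=0$ only: nothing prevents mass from flowing into $\{|x|\ge R\}$ at later times, so your claim that "for $R$ large enough one obtains, uniformly in $t$, $V_R''(t)\le8E_0[u_0]$" is unsupported. This is exactly why the paper's Proposition \ref{Estimate of the virial} is \emph{conditional} on the exterior-mass bound \eqref{006}, and why the proof of Theorem \ref{Main theorem} is a continuity/bootstrap argument: one chooses $R$ so that $\|u_0\|_{L^2(|x|\ge R)}\le\frac{1}{2}a_0$ as in \eqref{007}--\eqref{008}, lets $t_0$ be the maximal time up to which the exterior mass stays $\le a_0$, and on $[0,t_0]$ combines the conditional virial estimate, the resulting quadratic-in-time upper bound for $I_{\mathscr{X}_R}$, and the pointwise bound $\|u(t)\|_{L^2(|x|\ge R)}\le R^{-1}I_{\mathscr{X}_R}(t)^{1/2}$ to conclude that the exterior mass in fact stays $\le\frac{1}{2}a_0$; this forces $t_0=\infty$, and only then does convexity produce the contradiction. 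This self-improving propagation of exterior-mass smallness (from $a_0$ down to $\frac{1}{2}a_0$) is the key idea missing from your proposal; without it, the step you describe as "where essentially all the work lies" cannot be completed.
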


To use the scaling \eqref{002}, we can not apply directly to \eqref{NLS} with $V \neq 0$.
For example, the following equations \eqref{NLS} with $V = \frac{\gamma}{|x|^\mu}$ is not scale invariant :
\begin{align}\tag{NLS$_\gamma$} \label{NLSr}
	i\partial_t u + \partial_x^2 u - \frac{\gamma}{|x|^\mu}u
		& = - |u|^4u, \qquad (\gamma > 0,\,0 < \mu < 1).
\end{align}
For simplicity, we use \eqref{NLSr} and $E_\gamma$ as (NLS$_\frac{\gamma}{|x|^\mu}$) and $E_\frac{\gamma}{|x|^\mu}$ respectively.
To prove a similar result for \eqref{NLSr} with Theorem \ref{Main theorem}, we give an alternative proof without the scaling argument for initial data.
We note that we can see the local well-posedness of \eqref{NLSr} in \cite{Caz03}.

\begin{theorem}\label{Theorem of NLSr}
Let $\gamma > 0$, $0 < \mu < 1$, and let $u_0 \in H^1(\mathbb{R})$.
If $E_\gamma[u_0] < 0$, then the solution $u$ to \eqref{NLSr} with \eqref{IC} blows up.
\end{theorem}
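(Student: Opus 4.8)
The plan is to adapt the classical Glassey concavity argument through a \emph{localized} virial identity, the localization being the device that will replace the scaling \eqref{002} used by Ogawa--Tsutsumi. The decisive new point is that, although $V=\gamma|x|^{-\mu}$ destroys scale invariance, this potential enters the second time derivative of the virial functional with a \emph{favorable} sign precisely when $\gamma>0$ and $0<\mu<1$.

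First I would treat the model computation under the temporary hypothesis $xu_0\in L^2(\mathbb{R})$, so that $I(t):=\int_{\mathbb{R}}x^2|u(t,x)|^2\,dx$ is finite. Differentiating twice along the flow (justified by the local theory) gives $I'(t)=4\,\mathrm{Im}\int_{\mathbb{R}}x\,\bar u\,\partial_x u\,dx$ and the virial identity
\begin{align*}
	I''(t)
		= 8\|\partial_x u\|_{L^2}^2 - \tfrac{8}{3}\|u\|_{L^6}^6 - 4\int_{\mathbb{R}} xV'(x)|u|^2\,dx.
\end{align*}
Since $V(x)=\gamma|x|^{-\mu}$ satisfies the scaling relation $xV'(x)=-\mu V(x)$, the last integral equals $4\mu\int V|u|^2$; comparing with the conserved energy $16E_\gamma[u_0]=8\|\partial_x u\|_{L^2}^2+8\int V|u|^2-\tfrac{8}{3}\|u\|_{L^6}^6$ yields
\begin{align*}
	I''(t)
		= 16E_\gamma[u_0] - 4(2-\mu)\int_{\mathbb{R}}V(x)|u|^2\,dx
		\le 16E_\gamma[u_0] < 0,
\end{align*}
because $2-\mu>0$ and $V\ge 0$. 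A nonnegative function whose second derivative is bounded above by a negative constant cannot persist for all time, so $I$, hence by the blow-up alternative $\|u(t)\|_{H^1}$, must blow up.

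To drop the hypothesis $xu_0\in L^2$ I would repeat the computation with a truncated weight $a_R(x)=R^2\psi(x/R)$, where $\psi(r)=r^2$ for $|r|\le1$, $\psi$ is bounded with $\psi''\le2$, and $\psi',\psi''',\psi''''$ are bounded. Then $I_R(t):=\int_{\mathbb{R}}a_R|u|^2\,dx$ is finite for every $t$ by mass conservation, and the same manipulation produces
\begin{align*}
	I_R''(t)
		\le 16E_\gamma[u_0]
			+ 4\int_{\mathbb{R}}(a_R''-2)|\partial_x u|^2\,dx
			+ \tfrac{4}{3}\int_{\mathbb{R}}(2-a_R'')|u|^6\,dx
			+ \mathcal E_R(t),
\end{align*}
where $\mathcal E_R(t)=-\int a_R''''|u|^2-2\int(a_R'-2x)V'|u|^2$. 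The gradient term $4\int(a_R''-2)|\partial_x u|^2$ is $\le0$ since $a_R''\le2$; and $|\mathcal E_R(t)|\le C(R^{-2}+R^{-\mu})\|u_0\|_{L^2}^2$, because $|a_R''''|\lesssim R^{-2}$ while $|a_R'|\lesssim R$ and $|V'|\lesssim R^{-\mu-1}$ on the support of $a_R'-2x\subset\{|x|\ge R\}$. These contributions are harmless as $R\to\infty$.

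The main obstacle is the sextic error $\tfrac{4}{3}\int(2-a_R'')|u|^6\le\tfrac83\int_{|x|\ge R}|u|^6$, the very term for which Ogawa--Tsutsumi invoked scaling. Here I would instead use the mass-critical localized Gagliardo--Nirenberg inequality $\int_{|x|\ge R}|u|^6\le C\big(\int_{|x|\ge R}|u|^2\big)^2\int_{|x|\ge R}|\partial_x u|^2$, so that this term is controlled by the exterior mass $m_R(t)^2:=\int_{|x|\ge R}|u(t)|^2$ times the gradient. Since $u_0\in L^2$, the initial exterior mass $m_R(0)$ is as small as desired once $R$ is large, and the mass-flux identity $\tfrac{d}{dt}\int_{|x|\ge R}|u|^2=2\,\mathrm{Im}\int\chi_R'\,\bar u\,\partial_x u$ with $|\chi_R'|\lesssim R^{-1}$ shows that $m_R(t)$ can leak outward only slowly. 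The crux, which I expect to be the technically hardest step, is to run a continuity/bootstrap argument over the finite lifespan that keeps $m_R(t)$ small enough for the sextic error to remain strictly below $|16E_\gamma[u_0]|$ — either absorbing it into the favorable gradient term or, alternatively, integrating the virial inequality twice in time — thereby preserving $I_R''(t)\le \tfrac12\cdot16E_\gamma[u_0]<0$ and forcing $I_R$ to become negative in finite time, the desired contradiction. This localized mass bookkeeping is exactly what substitutes for the scaling argument and makes the method applicable to the non-scale-invariant equation \eqref{NLSr}.
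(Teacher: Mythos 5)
Your overall architecture --- a truncated virial weight $a_R$, the favorable sign of the potential contribution when $\gamma>0$ and $0<\mu<1$, smallness of the exterior mass for large $R$, and a continuity argument --- is the same as the paper's, and your treatment of the potential term (splitting $-2\int a_R'V'|u|^2\,dx$ via $xV'=-\mu V$ into a nonpositive piece plus an $O(R^{-\mu})$ error) is correct. However, the step you yourself flag as the crux is exactly where a genuine gap sits, and the two tools you propose for it would both fail as stated. The sharp-cutoff Gagliardo--Nirenberg inequality gives
\begin{align*}
\int_{|x|\ge R}|u|^6\,dx \le 4\,m_R(t)^4\int_{|x|\ge R}|\partial_x u|^2\,dx,
\end{align*}
i.e.\ the \emph{unweighted} exterior gradient, while the only negative term available for absorption is $-4\int(2-a_R'')|\partial_x u|^2\,dx$, whose weight $2-a_R''$ necessarily vanishes at $|x|=R$ and is small nearby (smoothness of $a_R$, which is forced on you by the $\int a_R^{(4)}|u|^2$ term in the virial identity, prevents $a_R''$ from jumping below $2$). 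Consequently no smallness of $m_R$ lets you dominate the sextic term: a gradient concentrating near $|x|=R$ defeats the absorption. This is precisely why the paper does not use the plain localized Gagliardo--Nirenberg inequality but the weighted Ogawa--Tsutsumi pointwise lemma
\begin{align*}
\|g_R u\|_{L^\infty(|x|\ge R)}^2 \le \|u\|_{L^2(|x|\ge R)}\left\{2\|g_R^2\partial_x u\|_{L^2(|x|\ge R)}+\|u\,\partial_x(g_R^2)\|_{L^2(|x|\ge R)}\right\},\qquad g_R:=\left(2-a_R''\right)^{1/4},
\end{align*}
so that $\int(2-a_R'')|u|^6\,dx=\int(g_Ru)^4|u|^2\,dx$ is bounded by $m_R^4$ times the \emph{matching weighted} gradient $\|g_R^2\partial_x u\|_{L^2}^2=\int(2-a_R'')|\partial_x u|^2\,dx$ (absorbable once $\tfrac83 m_R^4\le 1$, whence the threshold $a_0=(3/8)^{1/4}$), plus a term involving $\partial_x(g_R^2)=O(1/R)$ that goes into the $o(1)$ error $\eta$.

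The second failure is in your bookkeeping of the exterior mass: the flux bound $\bigl|\tfrac{d}{dt}m_R(t)^2\bigr|\lesssim R^{-1}\|u_0\|_{L^2}\|\partial_x u(t)\|_{L^2}$ is useless without an a priori bound on the gradient over the relevant time interval, which you do not have and cannot have, since the gradient is exactly what blows up. The paper closes the bootstrap differently, using only data at $t=0$: since the weight satisfies $\mathscr{X}_R\ge R^2$ on $\{|x|\ge R\}$ and $(\mathscr{X}_R')^2\le 4\mathscr{X}_R$, as long as the bootstrap hypothesis $m_R(t)\le a_0$ is in force one has $I''\le-2\widetilde{\eta}<0$, hence by completing the square $I(t)\le I(0)+\widetilde{\eta}^{-1}\|u_0\mathscr{X}_R'\|_{L^2}^2\|\partial_x u_0\|_{L^2}^2$, and therefore $m_R(t)\le R^{-1}I(t)^{1/2}\le\tfrac12 a_0$ once $R$ is large enough that the right-hand side of \eqref{007} is small; this strict improvement closes the continuity argument, and the same concavity inequality then forces $I<0$ in finite time. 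With these two repairs --- the weighted Ogawa--Tsutsumi lemma in place of the sharp-cutoff Gagliardo--Nirenberg inequality, and virial-based rather than flux-based control of $m_R$ --- your outline becomes the paper's proof.
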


\begin{remark}
Dinh \cite{Din21} showed the blow-up result for \eqref{NLSr} under $\gamma > 0$, $u_0 \in H^1(\mathbb{R}) \cap |x|^{-1}L^2(\mathbb{R})$, and $E_\gamma[u_0] < 0$.
That is, Theorem \ref{Theorem of NLSr} removes the condition $u_0 \in |x|^{-1}L^2(\mathbb{R})$ in \cite{Din21}.
\end{remark}

More generally, the same argument deduces the following corollary :
\begin{corollary}
Let $u_0 \in H^1(\mathbb{R})$.
We assume that $V$ satisfies the following (1) $\sim$ (3) :
\begin{enumerate}
\item
\eqref{NLS} is locally well-posed.
\item
For the solution $u$ to \eqref{NLS} with \eqref{IC}, $w(x) := \int_0^x\varphi(s)ds$, and $\varphi \in W^{3,\infty}(\mathbb{R})$, we have
\begin{align}
	\frac{d^2}{dt^2}\int_{\mathbb{R}}w(x)|u(t,x)|^2dx
		& = 4\int_{\mathbb{R}}w''(x)|\partial_x u(t,x)|^2dx - \frac{4}{3}\int_{\mathbb{R}}w''(x)|u(t,x)|^6dx \notag \\
		& \hspace{0.0cm} - \int_{\mathbb{R}}w^{(4)}(x)|u(t,x)|^2dx - 2\int_{\mathbb{R}}w'(x)V'(x)|u(t,x)|^2dx. \label{003}
\end{align}
\item
\begin{align}
	- R\mathscr{X}'\left(\frac{x}{R}\right)V'(x) - 4V(x)
		\leq 0 \label{004}
\end{align}
holds for any $R > 0$ and any $x \in \mathbb{R}$, where $\mathscr{X}$ is defined as \eqref{005}.
\end{enumerate}
If $E_V[u_0] < 0$, then the solution $u$ to \eqref{NLS} with \eqref{IC} blows up.
\end{corollary}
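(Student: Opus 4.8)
The plan is to run the proof of Theorem \ref{Theorem of NLSr} verbatim, after observing that the specific form $V=\gamma/|x|^\mu$ entered that argument only through three facts, which are exactly hypotheses (1)--(3): local well-posedness supplies a solution and a blow-up alternative; the localized virial identity \eqref{003} is the engine of the proof; and the pointwise sign condition \eqref{004} is what lets the potential contribution be discarded with the correct sign. So it suffices to recall the skeleton and indicate where each hypothesis enters. I argue by contradiction and treat the forward direction, the backward one being identical: suppose $u$ is global on $[0,\infty)$. Fix a large $R>0$, let $\mathscr{X}$ be the cut-off function of \eqref{005} (even, with $\mathscr{X}''\le 2$, $\mathscr{X}(y)=y^2$ near $0$, and $2-\mathscr{X}''$ supported in $\{|y|\ge 1\}$), and set $w(x):=R^2\mathscr{X}(x/R)$, so that $w\ge 0$, $w'(x)=R\mathscr{X}'(x/R)\in W^{3,\infty}(\mathbb{R})$, and $V_R(t):=\int_{\mathbb{R}}w(x)|u(t,x)|^2dx\ge 0$.

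Next I would insert this $w$ into \eqref{003} and rewrite the leading terms through the conserved energy. Splitting off the flat part $w''=2-(2-\mathscr{X}''(x/R))$ and using $8\norm{\partial_x u}_{L^2}^2-\tfrac{8}{3}\norm{u}_{L^6}^6=16E_V[u_0]-8\int_{\mathbb{R}}V|u|^2dx$, the identity \eqref{003} becomes
\begin{align*}
V_R''(t)
&= 16E_V[u_0] - 2\int_{\mathbb{R}}\big(R\mathscr{X}'(\tfrac{x}{R})V'(x)+4V(x)\big)|u|^2dx \\
&\quad - 4\int_{\mathbb{R}}\big(2-\mathscr{X}''(\tfrac{x}{R})\big)|\partial_x u|^2dx + \frac{4}{3}\int_{\mathbb{R}}\big(2-\mathscr{X}''(\tfrac{x}{R})\big)|u|^6dx \\
&\quad - \frac{1}{R^2}\int_{\mathbb{R}}\mathscr{X}^{(4)}(\tfrac{x}{R})|u|^2dx.
\end{align*}
By hypothesis (3), i.e. \eqref{004}, the integrand $R\mathscr{X}'(x/R)V'+4V$ is nonnegative, so the potential term is $\le 0$; the kinetic term $-4\int_{\mathbb{R}}(2-\mathscr{X}''(x/R))|\partial_x u|^2dx$ is $\le 0$ since $2-\mathscr{X}''\ge 0$; and the last term is bounded by $R^{-2}\norm{\mathscr{X}^{(4)}}_{L^\infty}M[u_0]$ because mass is conserved. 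Hence
\begin{align*}
V_R''(t)
\le 16E_V[u_0] - 4\int_{\mathbb{R}}\big(2-\mathscr{X}''(\tfrac{x}{R})\big)|\partial_x u|^2dx + \frac{4}{3}\int_{\mathbb{R}}\big(2-\mathscr{X}''(\tfrac{x}{R})\big)|u|^6dx + \frac{C\,M[u_0]}{R^2}.
\end{align*}

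The main obstacle is the surviving sextic term $\tfrac{4}{3}\int_{\mathbb{R}}(2-\mathscr{X}''(x/R))|u|^6dx$, which is supported on $\{|x|\gtrsim R\}$ and is \emph{not} controlled by conserved quantities; this is precisely the delicate point carried out in the proof of Theorem \ref{Theorem of NLSr}. There one applies a Gagliardo--Nirenberg inequality localized to the exterior region, producing a factor of the exterior mass $\int_{|x|\gtrsim R}|u|^2dx$ (kept below the ground-state threshold) multiplying the exterior kinetic energy, so that with the sharp constant this sextic term is absorbed into the good term $-4\int_{\mathbb{R}}(2-\mathscr{X}''(x/R))|\partial_x u|^2dx$ up to a remainder that is negligible for $R$ large. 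I would import that estimate unchanged, since it uses only $\mathscr{X}$ and the conservation laws, not the form of $V$. Granting it, we obtain a bound $V_R''(t)\le 16E_V[u_0]+o_R(1)$ holding uniformly in $t$, and since $E_V[u_0]<0$ we may fix $R$ so large that $V_R''(t)\le 8E_V[u_0]<0$ for all $t\ge 0$. Integrating twice forces $V_R(t)\to-\infty$, contradicting $V_R(t)\ge 0$. Thus $u$ cannot be global forward in time; the symmetric argument rules out global existence backward in time, so $u$ blows up.
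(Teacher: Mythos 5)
Your reduction of the corollary to the constant-sign-potential argument is set up correctly: the decomposition isolating $16E_V[u_0]$, the use of \eqref{004} to discard the potential contribution, and the $O(R^{-2})$ bound on the $\mathscr{X}^{(4)}$ term all match the paper. The genuine gap is in how you dispose of the sextic term. You claim that the localized Gagliardo--Nirenberg estimate absorbs $\tfrac{4}{3}\int(2-\mathscr{X}''(x/R))|u|^6\,dx$ into the exterior kinetic term up to a remainder that is negligible for large $R$, and you assert that this step ``uses only $\mathscr{X}$ and the conservation laws'' and therefore holds uniformly in $t$. That is false: the absorption requires the \emph{exterior mass} $\|u(t)\|_{L^2(|x|\ge R)}$ to be small, namely condition \eqref{006}, $\|u(t)\|_{L^2(|x|\ge R)}\le a_0=(3/8)^{1/4}$ (an absolute constant coming from the Ogawa--Tsutsumi interpolation inequality, not a ground-state threshold), and the exterior mass is \emph{not} a conserved quantity --- mass can migrate into $\{|x|\ge R\}$ as time evolves. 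Choosing $R$ large only makes the exterior mass small at $t=0$.

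The heart of the paper's method --- precisely what replaces the Ogawa--Tsutsumi scaling that is unavailable here --- is the continuity/bootstrap argument that propagates this smallness in time. One defines
\begin{align*}
	t_0
		:= \sup\{t>0 : \|u(s)\|_{L^2(|x|\ge R)}\le a_0 \ \text{ for any }\ 0\le s<t\},
\end{align*}
notes $t_0>0$ by \eqref{008}, and on $[0,t_0]$ combines the virial bound $I_{\mathscr{X}_R}''\le-2\tilde{\eta}$ with $\mathscr{X}_R\ge R^2$ on $\{|x|\ge R\}$ and $(\mathscr{X}_R')^2\le 4\mathscr{X}_R$ to get
\begin{align*}
	\|u(t)\|_{L^2(|x|\ge R)}
		\le \frac{1}{R}I_{\mathscr{X}_R}(t)^\frac{1}{2}
		\le \frac{1}{R}I_{\mathscr{X}_R}(0)^\frac{1}{2}\left(1+\frac{4}{\tilde{\eta}}\|\partial_xu_0\|_{L^2}^2\right)^\frac{1}{2}
		\le \frac{1}{2}a_0,
\end{align*}
provided $R$ is chosen so that \eqref{007} holds; this strict improvement rules out $t_0<\infty$, and only after concluding $t_0=\infty$ does the bound $I_{\mathscr{X}_R}''\le-2\tilde{\eta}<0$ hold for all time and yield the contradiction with $I_{\mathscr{X}_R}\ge0$. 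Without this step your claimed estimate $V_R''(t)\le 16E_V[u_0]+o_R(1)$ ``uniformly in $t$'' is unjustified and in fact circular: the uniform virial bound needs uniform exterior-mass smallness, while the exterior-mass smallness is itself proved from the virial bound. So you have correctly identified where hypotheses (1)--(3) enter, but you have omitted the key mechanism of the proof you are importing.
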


\begin{remark}
For example, if $V$ is a real-valued function and $V \in L^1(\mathbb{R}) + L^\infty(\mathbb{R})$, then \eqref{NLS} is locally well-posed (see \cite[Theorem 4.3.1]{Caz03}).
If $V$ is a real-valued function and $V$, $V' \in L^1(\mathbb{R}) + L^\infty(\mathbb{R})$, then \eqref{003} holds (see \cite[Lemma 3.1]{Ike21}).
If $V$ satisfies $V'(x) \leq 0$ $(x \leq 0)$ and $V'(x) \geq 0$ $(x \geq 0)$, then \eqref{004} holds.
\end{remark}

We can also deal with the following equation with a delta potential :
\begin{align}\tag{NLS$_{\gamma\delta}$}\label{NLSd}
	i\partial_t u + \partial_x^2 u - \gamma\delta u
		& = - |u|^4u, \qquad (\gamma > 0).
\end{align}
The Schr\"odinger operator $H_{\gamma\delta} := - \partial_x^2 + \gamma\delta$ has a domain
\begin{align*}
	\mathcal{D}(H_{\gamma\delta})
		:= \{f \in H^1(\mathbb{R}) \cap H^2(\mathbb{R} \setminus \{0\}) : \partial_xf(0+) - \partial_xf(0-) = \gamma f(0)\}
\end{align*}
and satisfies
\begin{align*}
	H_{\gamma\delta} f
		= - \partial_x^2f, \qquad f \in \mathcal{D}(H_{\gamma\delta}).
\end{align*}
A local well-posedness result can be seen in \cite[Theorem 3.7.1]{Caz03} and \cite[Section 2]{FukOhtOza08}.

\begin{theorem}\label{Theorem of NLSd}
Let $\gamma > 0$ and let $u_0 \in H^1(\mathbb{R})$.
If $E_{\gamma\delta}[u_0] < 0$, then the solution $u$ to \eqref{NLSd} with \eqref{IC} blows up, where the energy $E_{\gamma\delta}$ is defined as
\begin{align*}
	E_{\gamma\delta}[f]
		:= \frac{1}{2}\|\partial_xf\|_{L^2}^2 + \frac{\gamma}{2}|f(0)|^2 - \frac{1}{6}\|f\|_{L^6}^6.
\end{align*}
\end{theorem}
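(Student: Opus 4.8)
The plan is to run the author's non-scaling, truncated-virial argument (as in the proof of Theorem \ref{Theorem of NLSr}) for \eqref{NLSd}; the only genuinely new ingredient is the second-derivative identity for the localized variance, which must be derived by hand because $V=\gamma\delta$ is a measure: it lies outside $L^1+L^\infty$, so \eqref{003} and hence the Corollary do not apply to \eqref{NLSd}. Fix the weight $w_R(x):=R^2\mathscr{X}(x/R)$ built from the function $\mathscr{X}$ of \eqref{005}; recall $\mathscr{X}(y)=y^2$ near $y=0$, $\mathscr{X}''\le 2$, and $\mathscr{X},\mathscr{X}^{(4)}$ are bounded, so that $w_R$ is bounded, $w_R'(0)=0$, and $w_R''(0)=\mathscr{X}''(0)=2$. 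Put $I_R(t):=\int_{\mathbb{R}}w_R(x)|u(t,x)|^2\,dx$, which is finite and nonnegative since $u(t)\in L^2(\mathbb{R})$, and assume for contradiction that the solution is forward-global.

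First I would establish, for a solution of \eqref{NLSd}, the localized virial identity
\begin{align*}
	I_R''(t)
		= 4\int_{\mathbb{R}}w_R''|\partial_x u|^2\,dx - \frac{4}{3}\int_{\mathbb{R}}w_R''|u|^6\,dx - \int_{\mathbb{R}}w_R^{(4)}|u|^2\,dx + 2\gamma\,w_R''(0)\,|u(t,0)|^2 .
\end{align*}
Away from $x=0$ the function $u$ solves the free equation $i\partial_t u+\partial_x^2u=-|u|^4u$, while the domain condition $\partial_x u(t,0+)-\partial_x u(t,0-)=\gamma u(t,0)$ records the delta. Writing $I_R'(t)=2\int_{\mathbb{R}}w_R'\,\mathrm{Im}(\bar u\,\partial_x u)\,dx$ and differentiating, I would carry out every integration by parts on $(-\infty,0)$ and $(0,\infty)$ separately and keep the boundary values at $0$. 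The jump of $\mathrm{Im}(\bar u\,\partial_x u)$ at $0$ equals $\gamma\,\mathrm{Im}\,|u(t,0)|^2=0$, so $I_R'$ is boundary-free; in $I_R''$ the a priori uncontrollable boundary term carrying the jump of $|\partial_x u|^2$ is multiplied by $w_R'(0)=0$ and drops, whereas the jump $\partial_x|u|^2(t,0+)-\partial_x|u|^2(t,0-)=2\gamma|u(t,0)|^2$ survives against $w_R''(0)$ and yields the last term above.

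Next, using $w_R''(0)=2$ and the conservation of mass and of $E_{\gamma\delta}$, the identity rearranges as
\begin{align*}
	I_R''(t)
		= 16\,E_{\gamma\delta}[u_0] - 4\gamma|u(t,0)|^2 + \int_{\mathbb{R}}(4w_R''-8)|\partial_x u|^2\,dx + \frac{4}{3}\int_{\mathbb{R}}(2-w_R'')|u|^6\,dx - \int_{\mathbb{R}}w_R^{(4)}|u|^2\,dx .
\end{align*}
Here the boundary term enters with the favorable sign $-4\gamma|u(t,0)|^2\le 0$ (this is precisely why the focusing delta does not obstruct the argument); the kinetic term is nonpositive because $\mathscr{X}''\le 2$; the term with $w_R^{(4)}$ is $O(R^{-2})$ by mass conservation; and the remaining error $\frac{4}{3}\int(2-w_R'')|u|^6$ is nonnegative and supported in $\{|x|\ge R\}$. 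Discarding the nonpositive terms gives
\begin{align*}
	I_R''(t)\le 16\,E_{\gamma\delta}[u_0] + \frac{4}{3}\int_{\mathbb{R}}(2-w_R'')|u|^6\,dx + O(R^{-2}) .
\end{align*}
Estimating the tail error by the one-dimensional localized Gagliardo--Nirenberg inequality and then fixing $R$ large is done exactly as in the proof of Theorem \ref{Theorem of NLSr}, and yields $I_R''(t)\le 8\,E_{\gamma\delta}[u_0]<0$ for all $t\ge 0$. Integrating twice forces $I_R(t)\to-\infty$, contradicting $I_R\ge 0$; hence the solution is not forward-global, and the same argument for $t<0$ finishes the proof.

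The crux, and the main obstacle, is the virial identity for the singular potential: since $\gamma\delta$ is not in $L^1+L^\infty$, \eqref{003} cannot be invoked, and the computation has to be reorganized around the jump condition at $x=0$, with all boundary contributions tracked. Two points need care: (i) justifying the identity and the term-by-term integrations by parts rigorously, via a regularization of the data (or of the equation) followed by a limiting argument, the bounded weight $w_R$ ensuring all integrals converge; and (ii) confirming the cancellation structure at the origin, namely that $w_R'(0)=0$ annihilates the dangerous jump of $|\partial_x u|^2$ while the surviving term $2\gamma w_R''(0)|u(t,0)|^2$ combines with $E_{\gamma\delta}$ into a sign-favorable contribution, so that the error analysis reduces to the one already handled for \eqref{NLSr}.
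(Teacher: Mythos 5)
Your overall route is the paper's route: the paper proves Theorem \ref{Theorem of NLSd} by (i) the localized virial identity for \eqref{NLSd} with the boundary term $2\gamma w''(0)|u(t,0)|^2$ (Proposition \ref{Virial identity with delta potential}, which it quotes from \cite{BanVis16, IkeInu17} rather than re-deriving), (ii) the observation that after substituting the conserved energy $E_{\gamma\delta}$ this boundary contribution becomes $-4\gamma|u(t,0)|^2\le 0$ and can be dropped, and (iii) the localized Gagliardo--Nirenberg / large-$R$ / bootstrap machinery of Proposition \ref{Estimate of the virial} and the alternative proof of Theorem \ref{Main theorem}. Your hand derivation of the identity is sound and matches the quoted one: the jump of $\mathrm{Im}(\bar u\,\partial_xu)$ at $0$ vanishes, $w_R'(0)=0$ annihilates the jump of $|\partial_xu|^2$, and $w_R''(0)=2$ picks up the jump $\partial_x|u|^2(t,0+)-\partial_x|u|^2(t,0-)=2\gamma|u(t,0)|^2$.

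There is, however, one step that fails as written: you discard the nonpositive kinetic term $\int(4w_R''-8)|\partial_xu|^2\,dx$ \emph{before} estimating the tail term $\frac43\int(2-w_R'')|u|^6\,dx$. The localized Gagliardo--Nirenberg inequality (the Ogawa--Tsutsumi lemma in Section \ref{Sec:Preliminary}) does not bound that tail term by $O(R^{-2})$; it gives
\begin{align*}
	\frac{4}{3}\int_{|x|\ge R}\bigl(2-w_R''\bigr)|u|^6\,dx
		\le \frac{32}{3}\,\|u\|_{L^2(|x|\ge R)}^4\int_{|x|\ge R}\bigl(2-w_R''\bigr)|\partial_xu|^2\,dx + O(R^{-2}),
\end{align*}
i.e. it produces a gradient term which has no a priori uniform-in-time bound (there is no global $H^1$ bound here; the solution is supposed to blow up). In Proposition \ref{Estimate of the virial} this gradient term is absorbed into the negative kinetic term: the combined coefficient is $-4+\frac{32}{3}\|u\|_{L^2(|x|\ge R)}^4\le 0$ exactly when the tail-mass condition \eqref{006} holds, which is why the threshold is $a_0=(3/8)^{1/4}$ and why that condition must be propagated in time by the continuity/bootstrap argument of Section \ref{Sec:Proof}. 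Once you have thrown the kinetic term away, your appeal to doing the estimate ``exactly as in the proof of Theorem \ref{Theorem of NLSr}'' is no longer available, because that proof uses the kinetic term. The repair is an ordering fix (keep the kinetic term until after the GN step), together with stating the tail-mass condition \eqref{006} and its bootstrap explicitly, which your write-up never does; with those corrections your argument coincides with the paper's. A terminological quibble: for $\gamma>0$ the potential $+\gamma\delta$ is repulsive, not ``focusing''; that sign of $\gamma$ is precisely what makes $-4\gamma|u(t,0)|^2\le 0$.
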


The blow-up result for following Schr\"odinger equation on the star graph $\mathcal{G}$ with $J$-edges can be also gotten.
\begin{align}\tag{NLS$_\mathcal{G}$}\label{NLSg}
\begin{cases}
&\hspace{-0.4cm}\displaystyle{
	i\partial_t\bm{u} + \Delta_\mathcal{G}\bm{u}
		= - |\bm{u}|^4\bm{u}, \qquad (t,x) \in \mathbb{R} \times (0,\infty),
	} \\
&\hspace{-0.4cm}\displaystyle{
	\bm{u}(0,x)
		:= \bm{u_0}(x)
		:= (u_j(0,x))_{j=1}^J, \qquad x \in (0,\infty).
	}
\end{cases}
\end{align}
where $J \geq 1$, $\bm{u}(t,x) = (u_j(t,x))_{j=1}^J : \mathbb{R} \times (0,\infty) \longrightarrow \mathbb{C}^J$, and $|\bm{u}|^4\bm{u} := (|u_j|^4u_j)_{j=1}^J$.
The Schr\"odinger operator $-\Delta_\mathcal{G}$ is defined as follows :
Let complex-valued $n \times n$ matrices $A, B$ satisfy
\begin{itemize}
\item[(A1)]
$n \times (2n)$ matrix $(A,B)$ has maximal rank, that is, $\text{rank}(A,B) = n$.
\item[(A2)]
$AB^\ast$ is self-adjoint, that is, $AB^\ast = (AB^\ast)^\ast$, where $X^\ast$ denotes the adjoint of the matrix $X$ and is defined as $X^\ast := \overline{X}^T$.
\end{itemize}
The Schr\"odinger operator $-\Delta_\mathcal{G}$ has a domain
\begin{align*}
	\mathcal{D}(-\Delta_\mathcal{G})
		:= \{\bm{f} \in D(\mathcal{G}) : A\bm{f}(+0) + B\partial_x\bm{f}(+0) = 0\}
\end{align*}
and satisfies
\begin{align*}
	\Delta_\mathcal{G}\bm{f}
		= (\partial_x^2f_j)_{j=1}^J, \qquad \bm{f} \in \mathcal{D}(-\Delta_\mathcal{G}),
\end{align*}
where $D(\mathcal{G}) = \bigoplus_{j=1}^JD(0,\infty)$ and $D(0,\infty)$ is a set of functions $f \in H^2(0,\infty)$ satisfying that $f$ and $\partial_xf$ are absolutely continuous.
Under the assumption $(A1)$ and $(A2)$, the Laplacian $\Delta_\mathcal{G}$ is self-adjoint on $L^2(\mathcal{G})$ (see \cite{KosSch06}) and hence, $e^{it\Delta_\mathcal{G}}$ can be defined as the unitary operator on $L^2(\mathcal{G})$ by the Stone's theorem.
Here, we introduce typical boundary condition.
\begin{itemize}
\item[(a)]
Kirchhoff boundary condition :
Let $A$ and $B$ be
\begin{align}
	A
		=
		\begin{pmatrix}
			1 & -1 & 0 & \cdots & 0 & 0 \\
			0 & 1 & -1 & \cdots & 0 & 0 \\
			\vdots & \vdots & \vdots & & \vdots & \vdots \\
			0 & 0 & 0 & \cdots & 1 & -1 \\
			0 & 0 & 0 & \cdots & 0 & 0
		\end{pmatrix}, \qquad
	B
		=
		\begin{pmatrix}
			0 & 0 & 0 & \cdots & 0 & 0 \\
			0 & 0 & 0 & \cdots & 0 & 0 \\
			\vdots & \vdots & \vdots & & \vdots & \vdots \\
			0 & 0 & 0 & \cdots & 0 & 0 \\
			1 & 1 & 1 & \cdots & 1 & 1
		\end{pmatrix}. \label{016}
\end{align}
For such $A$ and $B$, $A\bm{f}(+0) + B\partial_x\bm{f}(+0) = 0$ implies that $f_i(+0) = f_j(+0)$ for any $i, j \in \{1,2,\ldots,J\}$ and $\sum_{j=1}^J\partial_xf_j(+0) = 0$.
This is called Kirchhoff boundary condition.
\item[(b)]
Dirac delta boundary condition :
Let $\gamma \neq 0$ and $A$, $B$ be
\begin{align}
	A
		=
		\begin{pmatrix}
			1 & -1 & 0 & \cdots & 0 & 0 \\
			0 & 1 & -1 & \cdots & 0 & 0 \\
			\vdots & \vdots & \vdots & & \vdots & \vdots \\
			0 & 0 & 0 & \cdots & 1 & -1 \\
			-\gamma & 0 & 0 & \cdots & 0 & 0
		\end{pmatrix}, \qquad
	B
		=
		\begin{pmatrix}
			0 & 0 & 0 & \cdots & 0 & 0 \\
			0 & 0 & 0 & \cdots & 0 & 0 \\
			\vdots & \vdots & \vdots & & \vdots & \vdots \\
			0 & 0 & 0 & \cdots & 0 & 0 \\
			1 & 1 & 1 & \cdots & 1 & 1
		\end{pmatrix}. \label{017}
\end{align}
For such $A$ and $B$, $A\bm{f}(+0) + B\partial_x\bm{f}(+0) = 0$ implies that $f_i(+0) = f_j(+0)$ for any $i, j \in \{1,2,\ldots,J\}$ and $\sum_{j=1}^J\partial_xf_j(+0) = \gamma f_1(+0)$.
This is called the Dirac delta boundary condition.
\item[(c)]
$\delta'$ boundary condition :
Let $\gamma \in \mathbb{R}$ and $A$, $B$ be
\begin{align}
	A
		=
		\begin{pmatrix}
			0 & 0 & 0 & \cdots & 0 & 0 \\
			0 & 0 & 0 & \cdots & 0 & 0 \\
			\vdots & \vdots & \vdots & & \vdots & \vdots \\
			0 & 0 & 0 & \cdots & 0 & 0 \\
			1 & 1 & 1 & \cdots & 1 & 1
		\end{pmatrix}, \qquad
	B
		=
		\begin{pmatrix}
			1 & -1 & 0 & \cdots & 0 & 0 \\
			0 & 1 & -1 & \cdots & 0 & 0 \\
			\vdots & \vdots & \vdots & & \vdots & \vdots \\
			0 & 0 & 0 & \cdots & 1 & -1 \\
			-\gamma & 0 & 0 & \cdots & 0 & 0
		\end{pmatrix}. \label{018}
\end{align}
For such $A$ and $B$, $A\bm{f}(+0) + B\partial_x\bm{f}(+0) = 0$ implies that $\partial_xf_i(+0) = \partial_xf_j(+0)$ for any $i, j \in \{1,2,\ldots,J\}$ and $\sum_{j=1}^Jf_j(+0) = \gamma \partial_xf_1(+0)$.
This is called $\delta'$ boundary condition.
\end{itemize}
Lebesgue space and Sobolev space on the star graph $\mathcal{G}$ is defined respectively as
\begin{align*}
	L^p(\mathcal{G})
		:= \bigoplus_{j=1}^JL^p(0,\infty), \quad
	H^s(\mathcal{G})
		:= \bigoplus_{j=1}^JH^s(0,\infty)\ \text{ for }\ s = 1,2
\end{align*}
with a norm
\begin{align*}
	\|\bm{f}\|_{L^p(\mathcal{G})}
		:= 
		\left\{
		\begin{array}{ll}
			\displaystyle \hspace{-0.2cm} \left(\sum_{j=1}^J\|f_j\|_{L^p(0,\infty)}^p\right)^\frac{1}{p}, & (1\leq p < \infty), \\
			\displaystyle \hspace{-0.2cm} \max_{1\leq j\leq J}\|f_j\|_{L^\infty(0,\infty)}, & (p = \infty), \\
		\end{array}
		\right. \quad
	\|\bm{f}\|_{H^s(\mathcal{G})}^2
		:= \sum_{j=1}^J\|f_j\|_{H^s(0,\infty)}^2\ \text{ for }\ s = 1,2.
\end{align*}
In addition, we set initial data space
\begin{align*}
	H_c^1(\mathcal{G})
		:= \{\bm{f} \in H^1(\mathcal{G}) : f_1(0) = \ldots = f_J(0)\}
\end{align*}
for \eqref{NLSg} with \eqref{016} or \eqref{017}.
Local well-posedness results of \eqref{NLSg} is cited in \cite{AdaCacFinNoj14, AngGol18, CacFinNoj17, GolOht20}.

\begin{theorem}\label{Theorem of NLSg}
Let $(A,B)$ be one of \eqref{016}, \eqref{017}, or \eqref{018}.
Assume that $\bm{u_0} \in H_c^1(\mathcal{G})$ if $(A,B)$ is \eqref{016} or \eqref{017} and $\bm{u_0} \in H^1(\mathcal{G})$ if $(A,B)$ is \eqref{018}.
We suppose that $\gamma > 0$ when $(A,B)$ is \eqref{017} or \eqref{018}.
If $E_\mathcal{G}[\bm{u_0}] < 0$, then the solution $\bm{u}$ to \eqref{NLSg} blows up, where the energy $E_\mathcal{G}$ is defined as
\begin{align*}
	E_\mathcal{G}[\bm{f}]
		:= \frac{1}{2}\|\partial_x\bm{f}\|_{L^2(\mathcal{G})}^2 - \frac{1}{6}\|\bm{f}\|_{L^6(\mathcal{G})}^6 + \frac{1}{2}P(\bm{f}),
\end{align*}
where
\begin{align*}
	P(\bm{f})
		:=
		\left\{
		\begin{array}{ll}
		\hspace{-0.2cm}\displaystyle
		0, \quad & (\text{if }(A,B)\text{ is }\eqref{016}.), \\[0.2cm]
		\hspace{-0.2cm}\displaystyle
		\gamma|f_1(+0)|^2, \quad & (\text{if }(A,B)\text{ is }\eqref{017}.), \\[0.2cm]
		\hspace{-0.2cm}\displaystyle
		\frac{1}{\gamma}\Biggl|\sum_{j=1}^Jf_j(+0)\Biggr|^2, \quad & (\text{if }(A,B)\text{ is }\eqref{018}.).
		\end{array}
		\right.
\end{align*}
\end{theorem}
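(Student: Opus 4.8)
The plan is to run the modified localized virial argument used for the whole‑line results (Theorems~\ref{Theorem of NLSr} and \ref{Theorem of NLSd}) edge by edge, the only genuinely new feature being the vertex terms created at $x=0$ when one integrates by parts on each half‑line. Using the cut‑off profile $\mathscr{X}$ from \eqref{005} (so that $\mathscr{X}(r)=r^2$ for $r\in[0,1]$, $\mathscr{X}$ is constant for large $r$, and $0\le\mathscr{X}''\le2$) and the weight $w_R(x):=R^2\mathscr{X}(x/R)$, one has $w_R(0)=w_R'(0)=w_R'''(0)=0$, $w_R''(0)=2$, $w_R''\equiv2$ on $[0,R]$, and $|w_R^{(4)}|\le R^{-2}\|\mathscr{X}^{(4)}\|_{L^\infty}$. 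I set
\[
	V_R(t):=\sum_{j=1}^J\int_0^\infty w_R(x)\,|u_j(t,x)|^2\,dx ,
\]
a nonnegative quantity with $V_R(t)\le R^2\|\mathscr{X}\|_{L^\infty}M[\bm{u_0}]$. Arguing by contradiction, suppose the solution is forward‑global; after the usual regularization (approximate $\bm{u_0}$ in $\mathcal{D}(-\Delta_\mathcal{G})$, run the computation for the resulting $H^2$‑solutions where the traces $\partial_x u_j(t,0)$ and the vertex conditions make pointwise sense, then pass to the $H^1$ limit via the continuous dependence of the local theory), I may differentiate $V_R$ twice and insert \eqref{NLSg} on each edge.

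The bulk is the edge‑wise analogue of \eqref{003} with $V=0$. Performing the two integrations by parts on $(0,\infty)$ and discarding the terms at $x=\infty$, the choice $w_R(0)=w_R'(0)=0$ annihilates every vertex contribution from the nonlinearity and from the lower‑order linear pieces, while $w_R'''(0)=0$ removes the residual $|u_j(t,0)|^2$‑term. Writing $\Sigma(t):=\sum_{j=1}^J\operatorname{Re}\bigl(\overline{u_j(t,0)}\,\partial_x u_j(t,0)\bigr)$, what survives is
\[
	V_R''(t)=\sum_{j=1}^J\Bigl(4\!\int_0^\infty w_R''|\partial_x u_j|^2-\tfrac{4}{3}\!\int_0^\infty w_R''|u_j|^6-\!\int_0^\infty w_R^{(4)}|u_j|^2\Bigr)+2\,w_R''(0)\,\Sigma(t).
\]
The heart of the proof is evaluating $\Sigma$ from the three boundary conditions. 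For \eqref{016}, the common value $c:=u_1(t,0)=\dots=u_J(t,0)$ together with $\sum_j\partial_x u_j(t,0)=0$ gives $\Sigma=\operatorname{Re}(\bar c\sum_j\partial_x u_j(t,0))=0=P(\bm{u})$. For \eqref{017}, $\sum_j\partial_x u_j(t,0)=\gamma c$ yields $\Sigma=\gamma|c|^2=\gamma|u_1(t,0)|^2=P(\bm{u})$. For \eqref{018}, the common normal derivative $d:=\partial_x u_1(t,0)=\dots=\partial_x u_J(t,0)$ and $\sum_j u_j(t,0)=\gamma d$ give $\Sigma=\operatorname{Re}(d\,\overline{\textstyle\sum_j u_j(t,0)})=\gamma|d|^2=\tfrac1\gamma|\sum_j u_j(t,0)|^2=P(\bm{u})$. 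In every case $\Sigma=P(\bm{u})\ge0$ (here $\gamma>0$ is used), so with $w_R''(0)=2$ the vertex term equals $4P(\bm{u})$.

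Since $w_R''\equiv2$ on $[0,R]$ and $\mathscr{X}''(0)=2$, comparing against $16E_\mathcal{G}[\bm{u_0}]=8\|\partial_x\bm{u}\|_{L^2(\mathcal{G})}^2-\tfrac{8}{3}\|\bm{u}\|_{L^6(\mathcal{G})}^6+8P(\bm{u})$ gives
\[
	V_R''(t)-16E_\mathcal{G}[\bm{u_0}]=\sum_{j=1}^J\Bigl(4\!\int_{0}^{\infty}(w_R''-2)|\partial_x u_j|^2-\tfrac{4}{3}\!\int_{0}^{\infty}(w_R''-2)|u_j|^6-\!\int_{0}^{\infty}w_R^{(4)}|u_j|^2\Bigr)-4P(\bm{u}).
\]
The last term is $\le0$. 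In the remaining sum every integrand is supported in $\{x>R\}$, where $w_R''-2\le0$: the kinetic defect is nonpositive, the $w_R^{(4)}$‑term is $O(R^{-2})M[\bm{u_0}]$, and the single dangerous piece $\tfrac{4}{3}\sum_j\int_{x>R}(2-w_R'')|u_j|^6$ is absorbed into the nonpositive kinetic defect by the same exterior Gagliardo--Nirenberg estimate as in the proofs of Theorems~\ref{Theorem of NLSr} and \ref{Theorem of NLSd}, since that computation is entirely local to $\{x>R\}$ and never sees the vertex. Choosing $R$ large therefore yields $V_R''(t)\le 8E_\mathcal{G}[\bm{u_0}]<0$ uniformly in $t$, while $0\le V_R(t)\le R^2\|\mathscr{X}\|_{L^\infty}M[\bm{u_0}]$; integrating this differential inequality twice forces $V_R$ negative in finite time, a contradiction, so $\bm{u}$ blows up. I expect the main obstacle to be exactly the vertex bookkeeping of the second paragraph—pinning down the sign and coefficient of $\Sigma$ under each of \eqref{016}, \eqref{017}, \eqref{018}, and checking that the $H^2$‑approximation respects the vertex conditions so that the traces converge and no spurious boundary term survives in the $H^1$ limit.
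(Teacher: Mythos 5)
Your vertex bookkeeping is correct and is exactly what the paper packages as Proposition \ref{Virial identity on graph}: with $w=\mathscr{X}_R$ the boundary contribution is $2w''(0)\Sigma(t)=4P(\bm{u}(t))$, and after comparing with $16E_\mathcal{G}[\bm{u_0}]$ (which contains $+8P$) one is left with $-4P(\bm{u}(t))\le 0$, using $\gamma>0$; your case-by-case identification $\Sigma=P$ under \eqref{016}, \eqref{017}, \eqref{018} matches the paper. The genuine gap is in your final paragraph, in the sentence ``Choosing $R$ large therefore yields $V_R''(t)\le 8E_\mathcal{G}[\bm{u_0}]<0$ uniformly in $t$.'' The exterior Gagliardo--Nirenberg absorption is not free: as in Proposition \ref{Estimate of the virial}, estimating $\tfrac43\sum_j\int_{x>R}(2-w_R'')|u_j|^6$ produces the factor $1-\tfrac83\bigl(\sum_j\|u_j(t)\|_{L^2(R,\infty)}^2\bigr)^2$ multiplying the kinetic defect, so the absorption works at time $t$ only if the \emph{exterior} mass at that very time is at most $a_0=(3/8)^{1/4}$ (this is hypothesis \eqref{006} of the paper, and it is a hypothesis, not a conclusion). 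Enlarging $R$ controls the exterior mass at $t=0$ only; since mass can flow outward, nothing in your argument prevents it from exceeding $a_0$ at later times, and then your uniform-in-$t$ differential inequality is simply not established. Unless one assumes the total mass is below $a_0^2$ (a much weaker theorem), the proof as written fails here.

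Propagating the smallness of the exterior mass for all time is precisely the content of the paper's alternative proof of Theorem \ref{Main theorem} --- it is what replaces Ogawa--Tsutsumi's scaling, and it is the part your proposal omits. The repair is a continuity (bootstrap) argument: choose $R$ so large that, besides $\widetilde{\eta}>0$, the quantitative condition \eqref{007} holds (so the initial exterior mass is $\le\tfrac12 a_0$); set $t_0:=\sup\{t>0:\ \text{exterior mass}\le a_0\ \text{on}\ [0,t)\}$. On $[0,t_0]$ the virial inequality is available; integrating it twice and using $(\mathscr{X}_R')^2\le4\mathscr{X}_R$ together with $\mathscr{X}_R\ge R^2$ on $\{x\ge R\}$ shows the exterior mass remains $\le\tfrac12 a_0$ up to $t_0$, contradicting equality with $a_0$ at $t=t_0$ if $t_0<\infty$; hence $t_0=\infty$, and only then does your concluding convexity argument ($V_R\ge0$ versus $V_R$ becoming negative in finite time) apply. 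Incidentally, this shows your closing assessment --- that the vertex bookkeeping is the main obstacle --- is misplaced: the vertex terms are routine, and the bootstrap is the heart of the matter.
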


\begin{remark}
When $\gamma < 0$, we cannot get the similar result with Theorem \ref{Theorem of NLSr}, \ref{Theorem of NLSd}, and \ref{Theorem of NLSg}.
For the equation \eqref{NLSr}, the standing wave solution $u(t,x) = e^{i\omega t}Q_{\omega,\gamma}(x)$ was gotten in \cite{Din20}, where $Q_{\omega,\gamma}$ satisfies
\begin{align*}
	- \omega \phi + \partial_x^2 \phi - \frac{\gamma}{|x|^\mu}\phi
		= - |\phi|^4\phi.
\end{align*}
The standing wave solution is time global and has negative energy.
The standing wave solution of \eqref{NLSd} or \eqref{NLSg} can be seen in \cite{AdaCacFinNoj14, FukOhtOza08, Gol22, GooHolWei04}.
\end{remark}

\textbf{Idea of the proof :}
Ogawa--Tsutsumi \cite{OgaTsu91} used a localized virial identity (Proposition \ref{Virial identity}) with a weighted function $\mathscr{X}$ (see \eqref{005} below for the definition).
The function $\mathscr{X}$ is equal to $x^2$ on $\{x : |x| \leq 1\}$ and Ogawa--Tsutsumi collect initial data into $\{x : |x| \leq 1\}$ by the scaling \eqref{002}.
However, the equations \eqref{NLSr}, \eqref{NLSd}, and \eqref{NLSg} are not scale invariant.
So, we replace $\mathscr{X}$ with $\mathscr{X}_R$ and spread a domain (, where the weighted function is equal to $x^2$) by taking sufficiently large $R$.

We note that radial functions have estimate
\begin{align*}
	\|f\|_{L^{p+1}(|x|\geq R)}^{p+1}
		\lesssim R^{-\frac{(d-1)(p-1)}{2}}\|f\|_{L^2(|x|\geq R)}^\frac{p+3}{2}\|\nabla f\|_{L^2(|x|\geq R)}^\frac{p-1}{2}
\end{align*}
for spatial dimension $d$.
Therefore, the weighted function $\mathscr{X}_R$ is often utilized in $d \geq 2$.
In this paper, we apply $\mathscr{X}_R$ in $d = 1$.

\subsection{Organization of the paper}

The organization of the rest of this paper is as follows:
In Section \ref{Sec:Preliminary}, we prepare some notations and tools.
In Section \ref{Sec:Proof}, we give an alternative proof of Theorem \ref{Main theorem}.
In Section \ref{Sec:Application}, we prove a blow-up result of \eqref{NLSr}, \eqref{NLSd}, and \eqref{NLSg} (Theorem \ref{Theorem of NLSr}, \ref{Theorem of NLSd}, and \ref{Theorem of NLSg}) by using the alternative proof.

\section{Preliminary}\label{Sec:Preliminary}

In this section, we define some notations and collect some tools. 

\subsection{Notations and definitions}

For $1 \leq p \leq \infty$, $L^p(\mathbb{R})$ denotes the usual Lebesgue space.
$H^1(\mathbb{R})$ and $W^{s,\infty}(\mathbb{R})$ $(s \in \mathbb{N})$ denote the usual Sobolev spaces.
If a space domain is not specified, then $x$-norm is taken over $R$.
That is, $\|f\|_{L^p} = \|f\|_{L^p(\mathbb{R})}$.

\subsection{Some tools}

\begin{lemma}[Ogawa--Tsutsumi, \cite{OgaTsu91}]
Let $f \in H^1(\mathbb{R})$ and $g \in W^{1,\infty}(\mathbb{R})$ be a real-valued function.
Then, we have
\begin{align*}
	\|fg\|_{L^\infty(|x|\geq R)}
		\leq \|f\|_{L^2(|x|\geq R)}^\frac{1}{2}\left\{2\|g^2\partial_xf\|_{L^2(|x|\geq R)} + \|f\partial_x(g^2)\|_{L^2(|x|\geq R)}\right\}^\frac{1}{2}
\end{align*}
for any $R > 0$.
\end{lemma}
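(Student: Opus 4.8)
The plan is to reduce the $L^\infty$ bound to a one-dimensional fundamental-theorem-of-calculus argument carried out separately on the two half-lines $[R,\infty)$ and $(-\infty,-R]$. First I would use that $f \in H^1(\mathbb{R})$ is absolutely continuous and vanishes at infinity, while $g \in W^{1,\infty}(\mathbb{R})$ is bounded, so that $fg \to 0$ as $|x| \to \infty$. Then for any $x \geq R$ one can write
\[
	|f(x)g(x)|^2 = -\int_x^\infty \partial_y\bigl(|f(y)g(y)|^2\bigr)\,dy,
\]
and symmetrically $|f(x)g(x)|^2 = \int_{-\infty}^x \partial_y(|fg|^2)\,dy$ for $x \leq -R$. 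In either case the right-hand side is dominated by the integral of $|\partial_y(|fg|^2)|$ over the whole exterior region $\{|y| \geq R\}$, which is the uniform quantity I want to estimate.

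The second step is to compute the integrand pointwise. Since $g$ is real-valued, $|fg|^2 = |f|^2 g^2$, and the product rule (valid almost everywhere because both $|f|^2$ and $g^2$ are absolutely continuous) gives
\[
	\partial_y(|f|^2 g^2) = 2\,\mathrm{Re}(\bar f\,\partial_y f)\,g^2 + |f|^2\,\partial_y(g^2).
\]
Taking absolute values and using $|\mathrm{Re}(\bar f\,\partial_y f)| \leq |f|\,|\partial_y f|$ yields
\[
	\bigl|\partial_y(|fg|^2)\bigr| \leq |f|\bigl(2 g^2 |\partial_y f| + |f|\,|\partial_y(g^2)|\bigr).
\]

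Finally, I would apply Cauchy--Schwarz to $\int_{|y|\geq R} |f|\cdot\bigl(2g^2|\partial_y f| + |f|\,|\partial_y(g^2)|\bigr)\,dy$, pairing $\|f\|_{L^2(|x|\geq R)}$ against the $L^2$-norm of the second factor, and then use the triangle inequality together with the observation that inserting absolute values changes neither $\|g^2\partial_y f\|_{L^2}$ nor $\|f\partial_y(g^2)\|_{L^2}$ (as $g$ is real) to get
\[
	\bigl\|2g^2|\partial_y f| + |f||\partial_y(g^2)|\bigr\|_{L^2(|x|\geq R)} \leq 2\|g^2\partial_x f\|_{L^2(|x|\geq R)} + \|f\partial_x(g^2)\|_{L^2(|x|\geq R)}.
\]
This bounds $|f(x)g(x)|^2$ uniformly in $x$ by the product on the right of the claimed inequality; taking the supremum over $|x| \geq R$ and a square root finishes the argument.

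I do not expect a serious obstacle here: the only points needing care are the justification of the fundamental theorem of calculus at the $H^1$ level (absolute continuity and decay at infinity) and the almost-everywhere validity of the product rule for $|f|^2 g^2$. If one prefers to sidestep working directly with weak derivatives, a density argument approximating $f$ by smooth functions and passing to the limit handles both issues cleanly, since all three norms on the right are continuous in $f$ with respect to the $H^1$ topology.
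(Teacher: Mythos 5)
Your proof is correct and follows essentially the same route as the paper's own (commented-out) argument: the fundamental theorem of calculus on each half-line $\{x \geq R\}$ and $\{x \leq -R\}$ applied to $|f|^2g^2$, the product rule, and Cauchy--Schwarz over the exterior region $\{|x| \geq R\}$. The only cosmetic difference is that you apply Cauchy--Schwarz once to the combined factor and then the triangle inequality, whereas the paper applies Cauchy--Schwarz to the two terms separately; both yield the identical bound.
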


%%%%%% begin comment %%%%%%
\begin{comment}
\textcolor{cyan}{
\begin{proof}
Fix $R > 0$.
Then, we have
\begin{align*}
	|f(x)|^2g(x)^2
		& = - \int_x^\infty \nabla\{|f(y)|^2g(y)^2\}dy \\
		& = - \int_x^\infty [2\text{Re}\{f(y)\overline{\nabla f(y)}\}g(y)^2 + |f(y)|^2\nabla g(y)^2]dy \\
		& \leq 2\int_R^\infty|g(y)^2f(y)\nabla f(y)|dy + \int_R^\infty |f(y)|^2|\nabla g(y)^2|dy
\end{align*}
for $x > R$.
We also get
\begin{align*}
	|f(x)|^2g(x)^2
		& = \int_{-\infty}^x \nabla\{|f(y)|^2g(y)^2\}dy \\
		& = \int_{-\infty}^x [2\text{Re}\{f(y)\overline{\nabla f(y)}\}g(y)^2 + |f(y)|^2\nabla g(y)^2]dy \\
		& \leq 2\int_{-\infty}^{-R}|g(y)^2f(y)\nabla f(y)|dy + \int_{-\infty}^{-R} |f(y)|^2|\nabla g(y)^2|dy
\end{align*}
for $x < - R$.
These inequalities imply
\begin{align*}
	|f(x)|^2g(x)^2
		& \leq 2\int_{|x|\geq R}|g(y)^2f(y)\nabla f(y)|dy + \int_{|x|\geq R} |f(y)|^2|\nabla g(y)^2|dy \\
		& \leq \|f\|_{L^2(|x|\geq R)}\left\{2\|g^2\nabla f\|_{L^2(|x|\geq R)} + \|f\nabla g^2\|_{L^2(|x|\geq R)}\right\},
\end{align*}
which completes the proof.
\end{proof}
}
\end{comment}
%%%%%% end comment %%%%%%%

\begin{proposition}[Localized virial identity \Rnum{1}, \cite{OgaTsu91}]\label{Virial identity}
Let $V = 0$.
We assume that $\varphi \in W^{3,\infty}(\mathbb{R})$ has a compact support.
If we define
\begin{align*}
	I_w(t)
		:=\int_{\mathbb{R}}w(x)|u(t,x)|^2dx
\end{align*}
for $w := \int_0^x\varphi(y)dy$ and the solution $u(t)$ to \eqref{NLS0}, then we have
\begin{align*}
	I_w'(t)
		& = 2\text{Im}\int_{\mathbb{R}}w'(x)\overline{u(t,x)}\partial_xu(t,x)dx, \\
	I_w''(t)
		& = 4\int_{\mathbb{R}}w''(x)|\partial_x u(t,x)|^2dx - \frac{4}{3}\int_{\mathbb{R}}w''(x)|u(t,x)|^6dx - \int_{\mathbb{R}}w^{(4)}(x)|u(t,x)|^2dx.
\end{align*}
\end{proposition}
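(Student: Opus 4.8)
The plan is to obtain the first identity from the mass--conservation (continuity) law and then differentiate once more, substituting the equation and integrating by parts; throughout, the compact support of $\varphi = w'$ (hence of $w''$, $w'''$, $w^{(4)}$) confines every integral to a fixed bounded set and makes all boundary terms vanish, so integration by parts costs nothing. First I would differentiate $I_w$ under the integral sign and use $\partial_t|u|^2 = 2\text{Re}(\overline u\,\partial_t u)$. Inserting $\partial_t u = i(\partial_x^2 u + |u|^4 u)$ and noting $\text{Re}(i|u|^6) = 0$, the nonlinearity drops out and one is left with the continuity equation $\partial_t|u|^2 = -2\,\partial_x\text{Im}(\overline u\,\partial_x u)$. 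Multiplying by $w$ and integrating by parts once yields
\begin{align*}
	I_w'(t)
		= 2\text{Im}\int_{\mathbb{R}} w'(x)\,\overline{u(t,x)}\,\partial_x u(t,x)\,dx,
\end{align*}
which is the first claimed formula.

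For the second identity I would differentiate $I_w'$ in time and again substitute the equation for $\partial_t u$ and $\partial_t\overline u$, giving
\begin{align*}
	I_w''(t)
		= 2\text{Im}\int_{\mathbb{R}} w'\bigl(\partial_t\overline u\,\partial_x u + \overline u\,\partial_x\partial_t u\bigr)\,dx,
\end{align*}
and then split the integrand into its linear (Laplacian) and nonlinear parts. For the linear part, $\text{Im}(iz) = \text{Re}(z)$ reduces the contribution to $2\text{Re}\int_{\mathbb{R}} w'(\overline u\,\partial_x^3 u - \partial_x^2\overline u\,\partial_x u)\,dx$; repeated integration by parts, together with $\text{Re}(\overline u\,\partial_x u) = \tfrac12\partial_x|u|^2$, collapses this to $4\int_{\mathbb{R}} w''|\partial_x u|^2\,dx - \int_{\mathbb{R}} w^{(4)}|u|^2\,dx$. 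For the nonlinear part, the same identity converts the contribution into $\tfrac43\int_{\mathbb{R}} w'\,\partial_x(|u|^6)\,dx$---after recognizing $|u|^4\partial_x|u|^2 = \tfrac13\partial_x|u|^6$---which integrates by parts to $-\tfrac43\int_{\mathbb{R}} w''|u|^6\,dx$. Summing the two parts produces exactly the stated expression for $I_w''$.

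The main obstacle is rigor rather than computation: for a merely $H^1_x$ solution the third spatial derivatives appearing above are not classically defined, and the differentiation under the integral sign and the integrations by parts are only formal. I would handle this by the standard approximation argument---carry out the computation first for regularized (e.g.\ $H^2$) data, where every manipulation is legitimate and the local theory propagates the extra regularity, and then pass to the limit using the continuous dependence in Theorem \ref{Local well-posedness} together with the conservation of mass and energy to control the relevant norms. Because $\varphi$ has compact support, all integrals remain over a fixed bounded set, which both justifies the absence of boundary terms and makes the limiting procedure uniform in time on compact subintervals.
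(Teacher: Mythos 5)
Your proposal is correct: the continuity-equation derivation of $I_w'$, the decomposition of $I_w''$ into linear and nonlinear parts via the pointwise identities $\mathrm{Re}(\overline u\,\partial_x^3u-\partial_x^2\overline u\,\partial_x u)=\tfrac12\partial_x^3|u|^2-2\partial_x|\partial_xu|^2$ and $|u|^4\partial_x|u|^2=\tfrac13\partial_x|u|^6$, and the closing $H^2$-regularization all check out, and the compact support of $\varphi$ does indeed kill every boundary term. This is essentially the same argument as in the paper: the authors do not reprove this proposition but cite Ogawa--Tsutsumi for it, and the computation they carry out for the analogous graph-case virial identity follows exactly the route you describe.
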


\section{An alternative proof of Theorem \ref{Main theorem}}\label{Sec:Proof}

We define an odd function $\zeta$ as follows :
\begin{align*}
	\zeta(s)
		& :=
		\left\{
		\begin{array}{cl}
		\hspace{-0.2cm}\displaystyle
			2s & \quad (0 \leq |s| \leq 1), \\
		\hspace{-0.2cm}\displaystyle
			2[s-(s-1)^3] & \quad (1 \leq s \leq 1+1/\sqrt{3}), \\
		\hspace{-0.2cm}\displaystyle
			2[s-(s+1)^3] & \quad (-1-1/\sqrt{3} \leq s \leq -1), \\
		\hspace{-0.2cm}\displaystyle
			\zeta'(s) < 0 & \quad (1+1/\sqrt{3} < |s| < 2), \\
		\hspace{-0.2cm}\displaystyle
			0 & \quad (2 \leq |s|).
		\end{array}
		\right.
\end{align*}
For the function $\zeta$, we set the following functions :
\begin{align}
	\mathscr{X}(x)
		& := \int_0^x\zeta(s)ds, \qquad
	\mathscr{X}_R(x)
		:= R^2\mathscr{X}\left(\frac{x}{R}\right). \label{005}
\end{align}

\begin{proposition}\label{Estimate of the virial}
Let $V = 0$.
Let $u_0 \in H^1(\mathbb{R})$.
If the solution $u \in C_t([0,T_\text{max});H_x^1(\mathbb{R}))$ to \eqref{NLS0} with \eqref{IC}, $0 \leq t < T_\text{max}$, and $R > 0$ satisfy
\begin{align}
	\|u(t)\|_{L^2(|x|\geq R)}
		\leq \left(\frac{3}{8}\right)^\frac{1}{4}
		=: a_0, \label{006}
\end{align}
then we have $I_{\mathscr{X}_R}''(t) \leq - 2\~{\eta} := 16 E_0[u_0] + 2\eta$, where
\begin{align*}
	\eta
		:= \frac{4}{3R^2}\left(\sqrt{6} + \frac{\|\zeta''\|_{L^\infty(1+1/\sqrt{3}\leq|x|\leq2)}}{2}\right)^2\|u_0\|_{L^2}^6 + \frac{\|\zeta^{(3)}\|_{L^\infty(1\leq|x|\leq2)}}{2R^2}\|u_0\|_{L^2}^2.
\end{align*}
\end{proposition}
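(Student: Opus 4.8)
The plan is to insert the weight $w=\mathscr{X}_R$ into the localized virial identity of Proposition \ref{Virial identity}. Since $\mathscr{X}_R'(x)=R\zeta(x/R)$ is supported in $|x|\leq 2R$ and lies in $W^{3,\infty}(\mathbb{R})$, the proposition applies, and with $\mathscr{X}_R''(x)=\zeta'(x/R)$ and $\mathscr{X}_R^{(4)}(x)=R^{-2}\zeta'''(x/R)$ it yields
\begin{align*}
	I_{\mathscr{X}_R}''(t)
		= 4\int_{\mathbb{R}}\zeta'(x/R)|\partial_x u|^2\,dx - \frac{4}{3}\int_{\mathbb{R}}\zeta'(x/R)|u|^6\,dx - \frac{1}{R^2}\int_{\mathbb{R}}\zeta'''(x/R)|u|^2\,dx.
\end{align*}
On $|x|\leq R$ one has $\zeta'(x/R)=2$, so I would split every integral at $|x|=R$ and subtract $16E_0[u(t)]=8\|\partial_x u\|_{L^2}^2-\tfrac{8}{3}\|u\|_{L^6}^6$, which by conservation of energy equals $16E_0[u_0]$. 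The contributions from $|x|\leq R$ cancel, leaving
\begin{align*}
	I_{\mathscr{X}_R}''(t)-16E_0[u_0]
		&= \int_{|x|\geq R}\big(4\zeta'(x/R)-8\big)|\partial_x u|^2\,dx \\
		&\quad + \frac{4}{3}\int_{|x|\geq R}\big(2-\zeta'(x/R)\big)|u|^6\,dx - \frac{1}{R^2}\int_{\mathbb{R}}\zeta'''(x/R)|u|^2\,dx.
\end{align*}

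Two of these three terms are disposed of immediately. Since $\zeta'(s)\leq 2$ for every $s$, we have $2-\zeta'(x/R)\geq 0$, so setting $A:=\|(2-\zeta'(\cdot/R))^{1/2}\partial_x u\|_{L^2(|x|\geq R)}$ the first integral equals $-4A^2\leq 0$. For the last integral, $\zeta'''$ is supported in $\{1\leq|s|\leq 2\}$ and mass is conserved, so its absolute value is at most $R^{-2}\|\zeta^{(3)}\|_{L^\infty(1\leq|x|\leq2)}\|u_0\|_{L^2}^2$, which is already the second summand of $2\eta$. It remains to absorb the positive term $\tfrac{4}{3}\int_{|x|\geq R}(2-\zeta'(x/R))|u|^6\,dx$.

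For this I factor $(2-\zeta')|u|^6=[(2-\zeta')^{1/4}|u|]^4|u|^2$ and bound
\begin{align*}
	\int_{|x|\geq R}(2-\zeta'(x/R))|u|^6\,dx \leq \|(2-\zeta'(\cdot/R))^{1/4}u\|_{L^\infty(|x|\geq R)}^4\,\|u\|_{L^2(|x|\geq R)}^2,
\end{align*}
then apply the Ogawa--Tsutsumi Lemma with $f=u$ and $g=(2-\zeta'(\cdot/R))^{1/4}$. This forces me to check that $g^2=(2-\zeta'(\cdot/R))^{1/2}\in W^{1,\infty}$, and here the precise shape of $\zeta$ is essential: near $s=1$ the cubic correction gives $2-\zeta'(s)=6(s-1)^2$, so $(2-\zeta'(s))^{1/2}=\sqrt{6}\,|s-1|$ is genuinely Lipschitz (not merely H\"older-$1/2$) with slope $\sqrt{6}$, while on $1+1/\sqrt{3}\leq|s|\leq2$ one has $2-\zeta'\geq2$ and hence $|\partial_s(2-\zeta')^{1/2}|\leq|\zeta''|/2$. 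Writing $g^2(x)=h(x/R)$ with $h=(2-\zeta')^{1/2}$, this yields $\|h'\|_{L^\infty}\leq\sqrt{6}+\tfrac12\|\zeta''\|_{L^\infty(1+1/\sqrt{3}\leq|x|\leq2)}$; since $\partial_x(g^2)=R^{-1}h'(\cdot/R)$, the Lemma term $B:=\|u\,\partial_x(g^2)\|_{L^2(|x|\geq R)}$ obeys $B\leq R^{-1}\|h'\|_{L^\infty}\|u\|_{L^2(|x|\geq R)}$, while $\|g^2\partial_x u\|_{L^2(|x|\geq R)}=A$. Verifying this Lipschitz bound is the step I expect to be the main obstacle, since it is the one place where the carefully engineered profile of $\zeta$ does the work.

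Finally I combine. The Lemma gives $\|(2-\zeta'(\cdot/R))^{1/4}u\|_{L^\infty(|x|\geq R)}^4\leq\|u\|_{L^2(|x|\geq R)}^2(2A+B)^2$, so
\begin{align*}
	\frac{4}{3}\int_{|x|\geq R}(2-\zeta'(x/R))|u|^6\,dx \leq \frac{4}{3}\|u\|_{L^2(|x|\geq R)}^4(2A+B)^2.
\end{align*}
Using $(2A+B)^2\leq 8A^2+2B^2$ together with the hypothesis $\|u(t)\|_{L^2(|x|\geq R)}\leq a_0$, i.e.\ $\|u\|_{L^2(|x|\geq R)}^4\leq 3/8$, the $A^2$ contribution is at most $\big(\tfrac{4}{3}\cdot\tfrac{3}{8}\cdot 8\big)A^2=4A^2$, exactly cancelling the $-4A^2$ from the gradient term; this identity is precisely why $a_0=(3/8)^{1/4}$. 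What survives is $\tfrac{8}{3}\|u\|_{L^2(|x|\geq R)}^4B^2$, and inserting $B^2\leq R^{-2}\|h'\|_{L^\infty}^2\|u\|_{L^2(|x|\geq R)}^2$ with $\|u\|_{L^2(|x|\geq R)}\leq\|u_0\|_{L^2}$ bounds it by $\tfrac{8}{3R^2}\|h'\|_{L^\infty}^2\|u_0\|_{L^2}^6$, hence by the first summand of $2\eta$. Adding the three estimates gives $I_{\mathscr{X}_R}''(t)-16E_0[u_0]\leq 2\eta$, that is $I_{\mathscr{X}_R}''(t)\leq 16E_0[u_0]+2\eta=-2\~{\eta}$, as claimed.
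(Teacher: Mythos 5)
Your proposal is correct and follows essentially the same route as the paper: the localized virial identity with weight $\mathscr{X}_R$ plus energy conservation, the Ogawa--Tsutsumi lemma applied with $g=(2-\zeta'(\cdot/R))^{1/4}$, the Lipschitz bounds $\sqrt{6}$ and $\tfrac12\|\zeta''\|_{L^\infty}$ on $(2-\zeta')^{1/2}$, and the choice $a_0=(3/8)^{1/4}$ to cancel the gradient term via $(2A+B)^2\leq 8A^2+2B^2$. Your write-up only differs in presentation (naming $A$, $B$ and making the cancellation explicit), not in substance.
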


\begin{proof}
Applying Proposition \ref{Virial identity}, we have
\begin{align*}
	I_{\mathscr{X}_R}''(t)
%		& \textcolor{cyan}{
%			= 4\int_{\mathbb{R}}\mathscr{X}''\left(\frac{x}{R}\right)|\partial_xu(t,x)|^2dx - \frac{4}{3}\int_{\mathbb{R}}\mathscr{X}''\left(\frac{x}{R}\right)|u(t,x)|^6dx - \int_{\mathbb{R}}\frac{1}{R^2}\mathscr{X}^{(4)}\left(\frac{x}{R}\right)|u(t,x)|^2dx
%		} \\
		& = 16E_0[u_0] - \int_{\mathbb{R}}g_{{}_R}(x)^4\left[4|\partial_xu(t,x)|^2 - \frac{4}{3}|u(t,x)|^6\right]dx - \int_{\mathbb{R}}\frac{1}{R^2}\mathscr{X}^{(4)}\left(\frac{x}{R}\right)|u(t,x)|^2dx,
\end{align*}
where $g_{{}_R}$ is defined as
\begin{align*}
	g_{{}_R}(x)
		:= \left\{2 - \mathscr{X}''\left(\frac{x}{R}\right)\right\}^\frac{1}{4}.
\end{align*}
Then, we have
\begin{align*}
	\int_{\mathbb{R}}g_{{}_R}(x)^4|u(t,x)|^6dx
		& = \int_{|x|\geq R}g_{{}_R}(x)^4|u(t,x)|^6dx \\
		& \leq \|u\|_{L^2(|x|\geq R)}^2\|g_{{}_R}u\|_{L^\infty(|x|\geq R)}^4 \\
		& \leq \|u\|_{L^2(|x|\geq R)}^4\left\{2\|g_{{}_R}^2\partial_xu\|_{L^2(|x|\geq R)} + \|u\partial_x(g_{{}_R}^2)\|_{L^2(|x|\geq R)}\right\}^2 \\
		& \leq 8\|u\|_{L^2(|x|\geq R)}^4\|g_{{}_R}^2\partial_xu\|_{L^2(|x|\geq R)}^2 + 2\|u\|_{L^2(|x|\geq R)}^6\|\partial_x(g_{{}_R}^2)\|_{L^\infty(|x|\geq R)}^2.
\end{align*}
By the simple calculation, we have
\begin{align*}
	|\partial_x(g_{_{R}}(x)^2)|
		\left\{
		\begin{array}{ll}
			\hspace{-0.2cm}\displaystyle
				= 0, & \quad (0 \leq |x/R| \leq 1,\ 2 \leq |x/R|), \\
			\hspace{-0.2cm}\displaystyle
				\leq \sqrt{6}/R, & \quad (1 \leq |x/R| \leq 1+1/\sqrt{3}), \\
			\hspace{-0.2cm}\displaystyle
				\leq \frac{1}{2R}\|\zeta''\|_{L^\infty(1+1/\sqrt{3}\leq|x|\leq2)}, & \quad (1+1/\sqrt{3} < |x/R| < 2).
		\end{array}
		\right.
\end{align*}
Therefore, we obtain
\begin{align*}
	I_{\mathscr{X}_R}''(t)
		& \leq 16E_0[u_0] - 4\left\{1 - \frac{8}{3}\|u\|_{L^2(|x|\geq R)}^4\right\}\int_{|x|\geq R}\left\{2 - \mathscr{X}''\left(\frac{x}{R}\right)\right\}|\partial_xu(t,x)|^2dx \notag \\
		& \hspace{0.5cm} + \frac{8}{3R^2}\left(\sqrt{6} + \frac{\|\zeta''\|_{L^\infty(1+1/\sqrt{3}\leq|x|\leq2)}}{2}\right)^2\|u\|_{L^2(|x|\geq R)}^6 + \frac{\|\zeta^{(3)}\|_{L^\infty(1\leq|x|\leq2)}}{R^2}\|u\|_{L^2(|x|\geq R)}^2,
\end{align*}
which completes the proof.
\end{proof}

\begin{proof}[An alternative proof of theorem \ref{Main theorem}]
We consider only positive time.
We assume for contradiction that $u$ exists globally in positive time direction.

We take sufficiently large $R > 0$ satisfying
\begin{gather}
	\~{\eta}
		> 0, \notag \\
	\frac{1}{R}\left(\int_{\mathbb{R}}\mathscr{X}_R(x)|u_0(x)|^2dx\right)^\frac{1}{2}\left(1 + \frac{4}{\~{\eta}}\|\partial_xu_0\|_{L^2}^2\right)^\frac{1}{2}
		\leq \frac{1}{2}a_0, \label{007}
\end{gather}
where $\eta$ and $a_0$ are given in Proposition \ref{Estimate of the virial}.
We note that $\~{\eta} \longrightarrow - 8E_0[u_0] > 0$ as $R \rightarrow \infty$ and
\begin{align*}
	\frac{1}{R^2}\int_{\mathbb{R}}\mathscr{X}_R(x)|u_0(x)|^2dx
		= \int_{\mathbb{R}}\mathscr{X}\left(\frac{x}{R}\right)|u_0(x)|^2dx
		= \int_{\mathbb{R}}\int_0^{x/R}\zeta(s)ds|u_0(x)|^2dx
		\longrightarrow 0
\end{align*}
as $R \rightarrow \infty$ by the dominated convergence theorem.
We prove that $u(t)$ satisfies \eqref{006} for any $0 \leq t < \infty$.
We note that it follows from \eqref{007}, $\~{\eta} > 0$, and $\mathscr{X}_R \geq R^2$ for $|x| \geq R$ that
\begin{align}
	\|u_0\|_{L^2(|x|\geq R)}
		\leq \frac{1}{2}a_0. \label{008}
\end{align}
Here, we define $t_0$ as
\begin{align*}
	t_0
		:= \sup\{t > 0 : \|u(s)\|_{L^2(|x|\geq R)} \leq a_0\ \text{ for any }\ 0 \leq s < t\}.
\end{align*}
By \eqref{008} and the continuity of $\|u(t)\|_{L^2}$, we note $t_0 > 0$.
When $t_0 = \infty$, we get the desired result.
When $0 < t_0 < \infty$, we have $\|u(t_0)\|_{L^2(|x|\geq R)} = a_0$ by the continuity of $\|u(t)\|_{L^2}$.
Then, the solution $u(t)$ satisfies \eqref{006} for any $0 \leq t \leq t_0$, so it follows from Proposition \ref{Estimate of the virial} that
\begin{align*}
	I_{\mathscr{X}_R}''(\tau)
		\leq - 2\~{\eta}
\end{align*}
for any $0 \leq \tau \leq t_0$.
Integrating this inequality over $\tau \in [0,s]$ and over $s \in [0,t]$,
\begin{align}
	I_{\mathscr{X}_R}(t)
		\leq I_{\mathscr{X}_R}(0) + I_{\mathscr{X}_R}'(0)t - \~{\eta} t^2 \label{009}
\end{align}
Combining \eqref{009} and $\~{\eta} > 0$, we have
\begin{align}
	I_{\mathscr{X}_R}(t)
		\leq I_{\mathscr{X}_R}(0) - \~{\eta}\left\{t - \frac{1}{2\~{\eta}}I_{\mathscr{X}_R}'(0)\right\}^2 + \frac{1}{4\~{\eta}}I_{\mathscr{X}_R}'(0)^2
		\leq I_{\mathscr{X}_R}(0) + \frac{1}{\~{\eta}}\|u_0 \mathscr{X}_R'\|_{L^2}^2\|\partial_x u_0\|_{L^2}^2 \label{010}
\end{align}
for any $0 \leq t \leq t_0$.
$\mathscr{X}_R \geq R^2$ $(|x| \geq R)$, \eqref{010}, $(\mathscr{X}_R')^2 \leq 4\mathscr{X}_R$, and \eqref{007} tell us
\begin{align*}
	\|u(t)\|_{L^2(|x|\geq R)}
		\leq \frac{1}{R}I_{\mathscr{X}_R}(t)^\frac{1}{2}
		\leq \frac{1}{R}I_{\mathscr{X}_R}(0)^\frac{1}{2}\left(1 + \frac{4}{\~{\eta}}\|\partial_xu_0\|_{L^2}^2\right)^\frac{1}{2}
		\leq \frac{1}{2}a_0
\end{align*}
for any $0 \leq t \leq t_0$.
However, this is contradiction.

When $t_0 = \infty$, \eqref{009} deduces that $I_{\mathscr{X}_R}(t) < 0$ in finite time, which is contradiction.
Therefore, the solution $u$ to \eqref{NLS0} with \eqref{IC} blows up.
\end{proof}

\section{Applications}\label{Sec:Application}

To prove Theorem \ref{Theorem of NLSr}, we use the following localized virial identity.

\begin{proposition}[Localized virial identity \Rnum{2}, \cite{Din21, TaoVisZha07}]\label{Virial identity with potential}
Let $V = \frac{\gamma}{|x|^\mu}$ and $0 < \mu < 1$.
We assume that $\varphi \in W^{3,\infty}(\mathbb{R})$ has a compact support.
If we define
\begin{align*}
	I_{\gamma,w}(t)
		:=\int_{\mathbb{R}}w(x)|u(t,x)|^2dx
\end{align*}
for $w := \int_0^x\varphi(y)dy$ and the solution $u(t)$ to \eqref{NLSr}, then we have
\begin{align*}
	I_{\gamma,w}'(t)
		& = 2\text{Im}\int_{\mathbb{R}}w'(x)\overline{u(t,x)}\partial_xu(t,x)dx, \\
	I_{\gamma,w}''(t)
		& = 4\int_{\mathbb{R}}w''(x)|\partial_x u(t,x)|^2dx - \frac{4}{3}\int_{\mathbb{R}}w''(x)|u(t,x)|^6dx \\
		& \hspace{4.0cm} - \int_{\mathbb{R}}w^{(4)}(x)|u(t,x)|^2dx + 2\mu\int_{\mathbb{R}}\frac{w'(x)}{x}\cdot\frac{\gamma}{|x|^\mu}|u(t,x)|^2dx.
\end{align*}
\end{proposition}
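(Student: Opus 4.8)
The plan is to establish both identities by the standard virial (Morawetz) computation, carried out first formally for smooth, spatially decaying solutions and then justified by approximation. Since the $V$-free part of the calculation is identical to the one behind Proposition \ref{Virial identity}, the only genuinely new ingredient is the contribution of the linear potential, so I would organise the whole argument around isolating and identifying that single term.

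For the first identity I would start from the local conservation law for the mass density. Writing the equation as $\partial_t u = i(\partial_x^2 u - Vu + |u|^4u)$ and using that $V$ is real-valued, both the potential term $Vu$ and the nonlinear term $|u|^4u$ are real multiples of $u$ and therefore drop out of the imaginary part, leaving the continuity equation $\partial_t|u|^2 = -2\partial_x\text{Im}(\overline{u}\,\partial_x u)$ exactly as in the case $V=0$. Differentiating $I_{\gamma,w}$ under the integral sign and integrating by parts once, which is legitimate because $w'=\varphi$ has compact support, then yields $I_{\gamma,w}'(t) = 2\text{Im}\int_{\mathbb{R}} w'(x)\overline{u(t,x)}\,\partial_x u(t,x)\,dx$.

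For the second identity I would differentiate $I_{\gamma,w}'$ in $t$, insert $\partial_t u$ and $\partial_t\overline{u}$ from the equation, and integrate by parts. Collecting the terms that do not involve $V$ reproduces verbatim the computation behind Proposition \ref{Virial identity}, giving $4\int w''|\partial_x u|^2 - \frac{4}{3}\int w''|u|^6 - \int w^{(4)}|u|^2$, so it suffices to track the $V$-dependent part. A short calculation shows that in $\partial_t\text{Im}(\overline{u}\,\partial_x u)$ the two contributions proportional to $V\overline{u}\,\partial_x u$ cancel and only $-V'(x)|u|^2$ survives; hence the potential contributes $-2\int_{\mathbb{R}} w'(x)V'(x)|u(t,x)|^2\,dx$. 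Finally I would compute $V'(x) = \partial_x(\gamma|x|^{-\mu}) = -\mu\gamma\,\sign(x)|x|^{-\mu-1}$ and use the identity $\sign(x)|x|^{-\mu-1} = \frac{1}{x|x|^\mu}$ to rewrite this as $2\mu\int_{\mathbb{R}}\frac{w'(x)}{x}\cdot\frac{\gamma}{|x|^\mu}|u(t,x)|^2\,dx$, which is precisely the stated term.

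The main obstacle is not the algebra but making the potential terms rigorous near the origin, where $V$ is singular: the differentiation under the integral and the integration by parts in the $V$-term are only formal as written, since $V'\sim|x|^{-\mu-1}$ is not locally integrable. The decisive structural point is that the surviving potential integral carries the factor $w'(x)/x$, which for the weights relevant here is bounded — for $w$ even (so $w'$ odd with $w'(x)=O(|x|)$ near $0$, indeed $w'(x)\approx 2x$ for $w=\mathscr{X}_R$) — so that, since $0<\mu<1$ makes $|x|^{-\mu}|u|^2$ locally integrable, the integral converges; this is exactly the role of the hypothesis $0<\mu<1$. To justify the identity I would regularise, replacing $V$ by a truncation $V_\varepsilon$ smooth near $0$ (or approximating $u_0$ by smooth, rapidly decaying data through the local theory of \cite{Caz03}), establish the identity for the regularised problem where all manipulations are licit, and pass to the limit using the uniform-in-time $H^1$ bound on $u(t)$ together with dominated convergence, following the treatment in \cite{Din21, TaoVisZha07}.
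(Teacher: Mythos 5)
Your derivation is correct, but there is nothing in the paper to compare it against: the paper states this proposition as a quoted result, with the citation \cite{Din21, TaoVisZha07}, and never proves it; the closest thing in the paper is the general identity \eqref{003} in the corollary of the introduction, itself quoted from \cite{Ike21}. Your computation is the standard one underlying those references, and its key algebraic points check out: since $V$ is real it drops out of $\partial_t|u|^2$, so the first identity is exactly as in the free case; in $\partial_t\,\mathrm{Im}(\overline{u}\,\partial_x u)$ the two contributions proportional to $V\,\mathrm{Re}(\overline{u}\,\partial_x u)$ cancel, leaving $-V'(x)|u|^2$, so the potential adds $-2\int_{\mathbb{R}}w'(x)V'(x)|u(t,x)|^2dx$ to $I_{\gamma,w}''$, in agreement with \eqref{003}; and for $V=\gamma|x|^{-\mu}$ one has $-2w'(x)V'(x)=2\mu\,\frac{w'(x)}{x}\cdot\frac{\gamma}{|x|^{\mu}}$, which is precisely the stated term. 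So your proposal supplies, in a self-contained way, the proof that the paper outsources to the literature, and it also makes visible why the sign condition \eqref{004} is the natural hypothesis in the paper's corollary.

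Two caveats, neither fatal. First, your convergence argument for the potential term rests on $w'(x)/x$ being bounded, which you deduce from $w$ even, i.e. $w'(0)=0$; this covers the only weight the paper ever uses ($w=\mathscr{X}_R$, for which $w'(x)/x=2$ near the origin), but not the proposition as literally stated, which allows any $\varphi\in W^{3,\infty}(\mathbb{R})$ with compact support. If $\varphi(0)\neq 0$ and $u(t,0)\neq 0$, the integrand of the last term has size $|x|^{-1-\mu}$ near $x=0$, so that integral is not absolutely convergent for any $\mu\in(0,1)$ and the identity can at best be read in a principal-value sense; this is a defect of the statement as quoted rather than of your argument, and your structural observation about $w'(x)/x$ is exactly the right repair. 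Second, in the limiting step of your regularization $V\mapsto V_\varepsilon$ you need not only dominated convergence of the integrals but also convergence of the regularized solutions to $u$ (continuous dependence, or equivalently $H^2$ approximation of the data within the local theory); this is standard but should be stated, since the time derivatives $I'$, $I''$ only make classical sense after that approximation.
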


Applying Proposition \ref{Virial identity with potential} with the weighted function $\mathscr{X}_R$, we have
\begin{align*}
	I_{\gamma,\mathscr{X}_R}''(t)
		\leq 16E_\gamma[u_0] + 2\eta + 2\int_{\mathbb{R}}\left\{\mu\frac{R}{x}\mathscr{X}'\left(\frac{x}{R}\right) - 4\right\}\frac{\gamma}{|x|^\mu}|u(t,x)|^2dx
\end{align*}
by the same argument with Proposition \ref{Estimate of the virial}, where $\eta$ is given in Proposition \ref{Estimate of the virial}.
It follows from $\mu\frac{R}{|x|}\mathscr{X}'\left(\frac{x}{R}\right) - 4 \leq 0$ that
\begin{align*}
	I_{\gamma,\mathscr{X}_R}''(t)
		\leq 16E_\gamma[u_0] + 2\eta.
\end{align*}
The rest of the proof of Theorem \ref{Theorem of NLSr} is the same with Theorem \ref{Main theorem}.

We turn to the nonlinear Schr\"odinger equation with the delta potential.
To prove Theorem \ref{Theorem of NLSd}, we use the following localized virial identity.

\begin{proposition}[Localized virial identity \Rnum{3}, \cite{BanVis16, IkeInu17}]\label{Virial identity with delta potential}
Let $V = \gamma\delta$.
We assume that $\varphi \in W^{3,\infty}(\mathbb{R})$ has a compact support and satisfies $\varphi(0) = 0$.
If we define
\begin{align*}
	I_{\delta,w}(t)
		:=\int_{\mathbb{R}}w(x)|u(t,x)|^2dx
\end{align*}
for $w := \int_0^x\varphi(y)dy$ and the solution $u(t)$ to \eqref{NLSd}, then we have
\begin{align*}
	I_{\delta,w}'(t)
		& = 2\text{Im}\int_{\mathbb{R}}w'(x)\overline{u(t,x)}\partial_xu(t,x)dx, \\
	I_{\delta,w}''(t)
		& = 4\int_{\mathbb{R}}w''(x)|\partial_x u(t,x)|^2dx - \frac{4}{3}\int_{\mathbb{R}}w''(x)|u(t,x)|^6dx \\
		& \hspace{4.0cm} - \int_{\mathbb{R}}w^{(4)}(x)|u(t,x)|^2dx + 2\gamma w''(0)|u(t,0)|^2.
\end{align*}
\end{proposition}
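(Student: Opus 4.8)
The plan is to follow the computation behind Proposition \ref{Virial identity}, but to perform every integration by parts separately on $(-\infty,0)$ and $(0,\infty)$ and to record the endpoint contributions at $x=0$ created by the jump of $\partial_xu$. The only new input from the potential is the transmission condition encoded in $\mathcal{D}(H_{\gamma\delta})$, namely $\partial_xu(t,0+)-\partial_xu(t,0-)=\gamma u(t,0)$, together with the continuity of $u$ across $0$; away from $x=0$ the solution obeys the free equation $i\partial_tu+\partial_x^2u=-|u|^4u$.

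First I would establish the formula for $I_{\delta,w}'$. Differentiating under the integral and using \eqref{NLSd} gives, for $x\neq0$, $\partial_t|u|^2=-2\,\partial_x P$ with the momentum density $P:=\text{Im}(\bar u\,\partial_xu)$, because the contributions of $|u|^4u$ to $\text{Re}(\bar u\,\partial_tu)$ are purely imaginary and drop out. The density $P$ is continuous at $0$: its jump equals $\text{Im}\big(\bar u(0)[\partial_xu(0+)-\partial_xu(0-)]\big)=\gamma\,\text{Im}(|u(0)|^2)=0$. Hence integrating $-2\int_\mathbb{R} w\,\partial_xP\,dx$ by parts on the two half-lines produces no term at $0$ (and none at $\pm\infty$, since $u\in H^1$ and $\varphi$ has compact support), giving $I_{\delta,w}'(t)=2\int_\mathbb{R} w'P\,dx=2\,\text{Im}\int_\mathbb{R} w'\,\bar u\,\partial_xu\,dx$.

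For the second derivative I would differentiate $I_{\delta,w}'(t)=2\int_\mathbb{R} w'P\,dx$ once more. For $x\neq0$ the free local momentum identity yields $\partial_tP=\partial_xS$ with
\[
	S:=\tfrac12\partial_x^2|u|^2-2|\partial_xu|^2+\tfrac23|u|^6.
\]
Splitting $\int_\mathbb{R} w'\,\partial_xS\,dx$ over the two half-lines and integrating by parts, the endpoint terms at $0$ carry the factor $w'(0)=\varphi(0)$, which vanishes by hypothesis, so $I_{\delta,w}''(t)=-2\int_\mathbb{R} w''S\,dx$. This immediately produces $4\int_\mathbb{R} w''|\partial_xu|^2\,dx$ and $-\tfrac43\int_\mathbb{R} w''|u|^6\,dx$, and it remains to handle $-\int_\mathbb{R} w''\,\partial_x^2|u|^2\,dx$. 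Integrating this by parts twice on each half-line, and using that $w''$ and $w'''$ are continuous at $0$, the two $w'''(0)|u(0)|^2$ boundary terms cancel while the jump of $\partial_x|u|^2$, equal to $2\,\text{Re}\big(\bar u(0)[\partial_xu(0+)-\partial_xu(0-)]\big)=2\gamma|u(0)|^2$, survives multiplied by $w''(0)$. This gives $-\int_\mathbb{R} w''\,\partial_x^2|u|^2\,dx=2\gamma w''(0)|u(0)|^2-\int_\mathbb{R} w^{(4)}|u|^2\,dx$, and collecting the four contributions yields the claimed identity.

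The main obstacle is not the algebra but its rigorous justification for a solution that a priori lies only in $H^1$: differentiating under the integral, the pointwise identity $\partial_tP=\partial_xS$, and the two integrations by parts all require enough regularity for the traces $u(t,0)$, $\partial_xu(t,0\pm)$, $\partial_x^2|u|^2$ to be meaningful and for the integrals to converge. I would deal with this by first proving the identity for data in $\mathcal{D}(H_{\gamma\delta})$, where $u(t)$ is $H^2$ on each half-line and the transmission condition holds classically, and then passing to the limit for general $u_0\in H^1$ by continuous dependence of the flow (the compact support of $\varphi$ keeps all quantities controlled); alternatively one may simply invoke the computations of \cite{BanVis16, IkeInu17}. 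Note that the hypothesis $\varphi(0)=0$ is used precisely to discard the endpoint term when integrating $\int_\mathbb{R} w'\,\partial_xS\,dx$ by parts, which is what isolates the single surviving jump term $2\gamma w''(0)|u(0)|^2$.
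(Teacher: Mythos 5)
Your proposal is correct. Note first that the paper itself gives no proof of this proposition: it is quoted directly from \cite{BanVis16, IkeInu17}, so there is no internal argument to compare against; the closest thing in the source is the authors' (commented-out) computation for the star-graph version, Proposition \ref{Virial identity on graph}, which proceeds exactly as you do --- integration by parts on each half-line with the vertex/transmission condition supplying the boundary terms. Your verification is sound at every step: the momentum density $P=\mathrm{Im}(\bar u\,\partial_x u)$ is indeed continuous at $0$ because the jump $\gamma\,\mathrm{Im}|u(0)|^2$ vanishes; the flux $S=\tfrac12\partial_x^2|u|^2-2|\partial_x u|^2+\tfrac23|u|^6$ is the right one for the focusing quintic nonlinearity; the hypothesis $\varphi(0)=w'(0)=0$ is used precisely, and only, to kill the jump of $S$ (both $|\partial_x u|^2$ and $\partial_x^2|u|^2$ are discontinuous at $0$); and the surviving term $w''(0)\bigl[\partial_x|u|^2(0+)-\partial_x|u|^2(0-)\bigr]=2\gamma w''(0)|u(0)|^2$ together with the cancellation of the $w'''(0)|u(0)|^2$ terms (by continuity of $u$) reproduces the stated identity. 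You also correctly flag the only genuine issue, regularity: for $H^1$ data the traces $\partial_x u(t,0\pm)$ need not exist, and your fix --- prove the identity for data in $\mathcal{D}(H_{\gamma\delta})$, where the flow preserves the domain and the transmission condition holds classically, then pass to the limit by continuous dependence --- is the standard and adequate remedy, and is essentially how the cited references justify the identity.
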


Applying Proposition \ref{Virial identity with delta potential} with the weighted function $\mathscr{X}_R$, we have
\begin{align*}
	I_{\delta,\mathscr{X}_R}''(t)
		\leq 16E_{\gamma\delta}[u_0] + 2\eta - 4\gamma|u(t,0)|^2
\end{align*}
by the same argument with Proposition \ref{Estimate of the virial}, where $\eta$ is given in Proposition \ref{Estimate of the virial}.
It follows from $- 4\gamma|u(t,0)|^2 \leq 0$ that
\begin{align*}
	I_{\delta,\mathscr{X}_R}''(t)
		\leq 16E_{\gamma\delta}[u_0] + 2\eta.
\end{align*}
The rest of the proof of Theorem \ref{Theorem of NLSd} is the same with Theorem \ref{Main theorem}.

To prove Theorem \ref{Theorem of NLSg}, we use the following localized virial identity.

\begin{proposition}[Localized virial identity \Rnum{4}, \cite{Gol22, GolOht20}]\label{Virial identity on graph}
We assume that $\varphi \in W^{3,\infty}(0,\infty)$ has compact support and satisfies $\varphi(0) = 0$.
If we define
\begin{align*}
	I_{\mathcal{G},w}(t)
		:= \int_\mathcal{G}w(x)|\bm{u}(t,x)|^2dx.
\end{align*}
for $w = \int_0^x\varphi(y)dy$ and the solution $\bm{u}$ to \eqref{NLSg}, then we have
\begin{align*}
	I_{\mathcal{G},w}'(t)
		& = 2\text{Im}\int_\mathcal{G}w'(x)\overline{\bm{u}(t,x)}\partial_x\bm{u}(t,x)dx, \\
	I_{\mathcal{G},w}''(t)
		& = 4\int_\mathcal{G}w''(x)|\partial_x\bm{u}(t,x)|^2dx - \frac{4}{3}\int_\mathcal{G}w''(x)|\bm{u}(t,x)|^6dx \\
		& \hspace{4.0cm} - \int_\mathcal{G}w^{(4)}(x)|\bm{u}(t,x)|^2dx + 2w''(0)P(\bm{u}(t)),
\end{align*}
where $P$ is defined in Theorem \ref{Theorem of NLSg}.
\end{proposition}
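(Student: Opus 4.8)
The plan is to imitate the proofs of the earlier virial identities (Propositions \ref{Virial identity}, \ref{Virial identity with potential}, \ref{Virial identity with delta potential}), treating each edge of the star graph separately and letting the coupling enter only through the boundary evaluations at the vertex $x=0$. Since $I_{\mathcal{G},w}(t)=\sum_{j=1}^J\int_0^\infty w(x)|u_j(t,x)|^2\,dx$, every computation reduces to one on the half-line $(0,\infty)$, summed over $j$, with the vertex condition $A\bm{u}(+0)+B\partial_x\bm{u}(+0)=0$ used only at the final step. For the first derivative I would use that on each edge the equation reads $i\partial_t u_j=-\partial_x^2 u_j-|u_j|^4u_j$, so the local conservation law $\partial_t|u_j|^2=-2\partial_x\,\text{Im}(\overline{u_j}\partial_x u_j)$ holds; substituting this into $I_{\mathcal{G},w}'$ and integrating by parts once, the endpoint term at $x=0$ vanishes because $w(0)=\int_0^0\varphi=0$ and the one at $x=+\infty$ vanishes since $u_j(t)\in H^1(0,\infty)$. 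This yields the stated formula for $I_{\mathcal{G},w}'$.

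For the second derivative I would differentiate $I_{\mathcal{G},w}'$, insert the equation, and compute $\partial_t\,\text{Im}(\overline{u_j}\partial_x u_j)$. The key algebraic fact is that the nonlinearity organizes into a perfect derivative, giving $\partial_t\,\text{Im}(\overline{u_j}\partial_x u_j)=\partial_x[\text{Re}(\overline{u_j}\partial_x^2 u_j)-|\partial_x u_j|^2]+\tfrac{2}{3}\partial_x|u_j|^6$. Integrating by parts (twice more for the linear part, using $\text{Re}(\overline{u_j}\partial_x^2 u_j)=\partial_x\,\text{Re}(\overline{u_j}\partial_x u_j)-|\partial_x u_j|^2$ and $\text{Re}(\overline{u_j}\partial_x u_j)=\tfrac12\partial_x|u_j|^2$), the interior integrals assemble into $4\int_\mathcal{G}w''|\partial_x\bm{u}|^2-\tfrac{4}{3}\int_\mathcal{G}w''|\bm{u}|^6-\int_\mathcal{G}w^{(4)}|\bm{u}|^2$. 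The boundary contributions at $x=0$ proportional to $w'$ vanish because $w'(0)=\varphi(0)=0$, and the one proportional to $w'''$ drops for the even weight in \eqref{005} (where $\mathscr{X}$ is even, hence $w'''(0)=0$); what survives is exactly $2w''(0)\sum_{j=1}^J\text{Re}(\overline{u_j}(0)\partial_x u_j(0))$.

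It then remains to identify this vertex term with $2w''(0)P(\bm{u})$, i.e.\ to show $\sum_j\text{Re}(\overline{u_j}(+0)\partial_x u_j(+0))=P(\bm{u})$ in each case. For the conditions \eqref{016} and \eqref{017} the continuity $u_1(+0)=\dots=u_J(+0)=:c$ lets one factor the sum as $\text{Re}(\overline{c}\sum_j\partial_x u_j(+0))$, which equals $0$ for \eqref{016} and $\gamma|c|^2$ for \eqref{017} by the respective derivative relation $\sum_j\partial_x u_j(+0)=0$ or $=\gamma u_1(+0)$. For \eqref{018} the common derivative $\partial_x u_1(+0)=\dots=:d$ gives $\text{Re}(d\,\overline{\sum_j u_j(+0)})$, and since $\sum_j u_j(+0)=\gamma d$ this is $\gamma|d|^2=\tfrac{1}{\gamma}|\sum_j u_j(+0)|^2$. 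In every case the value agrees with $P(\bm{u})$ as defined in Theorem \ref{Theorem of NLSg}.

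The main obstacle is twofold. First, the careful bookkeeping of the vertex boundary terms: one must be sure the lower-order endpoint contributions genuinely cancel, which is precisely where $\varphi(0)=0$ and the vanishing of $w'''(0)$ for the weight \eqref{005} are needed. Second, and more seriously, the computation performs three integrations by parts and thus is rigorous only when $\bm{u}(t)$ enjoys enough regularity and satisfies the vertex conditions pointwise, i.e.\ when $\bm{u}(t)\in\mathcal{D}(-\Delta_\mathcal{G})$. I would therefore establish the identity first for such regular data, and then pass to general $H_c^1(\mathcal{G})$ (or $H^1(\mathcal{G})$) initial data by an approximation argument together with the continuity of both sides in the energy norm, exactly as in the standard derivation of localized virial identities.
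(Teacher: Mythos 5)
Your proof is correct and follows essentially the same route as the paper's: the paper states this proposition with a citation to \cite{Gol22, GolOht20}, and the derivation contained in its source (edge-by-edge local conservation laws, integration by parts on each half-line, then the vertex coupling to evaluate the boundary term) is exactly the computation you describe, carried out there only for the Dirac delta condition \eqref{017}. Beyond that agreement, your write-up is more careful on two points: you verify $\sum_{j}\mathrm{Re}\bigl(\overline{u_j(+0)}\,\partial_xu_j(+0)\bigr)=P(\bm{u})$ for all three vertex couplings \eqref{016}, \eqref{017}, \eqref{018}, which is what Theorem \ref{Theorem of NLSg} actually requires, and you flag the boundary term proportional to $w'''(0)$, which the paper's computation drops silently. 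The latter is a genuine subtlety in the statement itself: on a star graph the vertex contributions from all edges enter with the same sign (there is no cancellation by continuity of $|u|^2$ across $x=0$, as there is in the whole-line setting of Proposition \ref{Virial identity with delta potential}), so for a general $\varphi$ with only $\varphi(0)=0$ the identity acquires an additional term $-\,w'''(0)\,|\bm{u}(t,0)|^2$; the stated form is valid precisely when $w'''(0)=0$, which holds for the weight $\mathscr{X}_R$ of \eqref{005} used in the application since $\mathscr{X}_R(x)=x^2$ near the vertex, exactly as you observe.
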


Applying Proposition \ref{Virial identity on graph} with the weighted function $\mathscr{X}_R$, we have
\begin{align*}
	I_{\mathcal{G},\mathscr{X}_R}''(t)
		\leq 16E_\mathcal{G}[\bm{u_0}] + 2\eta_{{}_\mathcal{G}} - 4P(u(t))
\end{align*}
by the same argument with Proposition \ref{Estimate of the virial}, where
\begin{align*}
	\eta_{{}_\mathcal{G}}
		:= \frac{4}{3R^2}\left(\sqrt{6} + \frac{\|\zeta''\|_{L^\infty(1+1/\sqrt{3}\leq|x|\leq2)}}{2}\right)^2\|\bm{u}\|_{L^2(\mathcal{G})}^6 + \frac{\|\zeta^{(3)}\|_{L^\infty(1\leq|x|\leq2)}}{2R^2}\|\bm{u}\|_{L^2(\mathcal{G})}^2,
\end{align*}
It follows from $- 4P(u(t)) \leq 0$ that
\begin{align*}
	I_{\mathcal{G},\mathscr{X}_R}''(t)
		\leq 16E_\mathcal{G}[\bm{u_0}] + 2\eta_{{}_\mathcal{G}}.
\end{align*}
The rest of the proof of Theorem \ref{Theorem of NLSg} is the same with Theorem \ref{Main theorem}.

\subsection*{Acknowledgements}
The first author is supported by Foundation of Research Fellows, The Mathematical Society of Japan.
The second author is supported by JSPS KAKENHI Grant Number JP18H01132, JP19K14581, and JST CREST Grant Number JPMJCR1913.
The third author is supported by JSPS KAKENHI Grant Number JP19H00644 and JP20K03671.


\begin{thebibliography}{99}
\bibitem{AdaCacFinNoj14} R. Adami, C. Cacciapuoti, D. Finco, and D. Noja, \textit{Variational properties and orbital stability of standing waves for NLS equation on a star graph}, J. Differential Equations \textbf{257} (10) (2014) 3738--3777.

\bibitem{AngGol18} J. Angulo and N. Goloshchapova, \textit{Extension theory approach in the stability of the standing waves for the NLS equation with point interactions on a star graph}, Adv. Differential Equations \textbf{23} (2018) 793--846.

\bibitem{BaiCazFig77} J. B. Baillon, T. Cazenave, and M. Figueira, \textit{\'Equation de Schr\"odinger avec non-lin\'earit\'e int\'egrale}, C. R. Acad. Sci. Paris S\'er. A-B 284 (1977), no. 16, A939--A942. MR0433026

\bibitem{BanVis16} V. Banica and N. Visciglia, \textit{Scattering for NLS with a delta potential}, J. Differential Equations \textbf{260} (2016), no. 5, 4410--4439. MR3437592

\bibitem{CacFinNoj17} C. Cacciapuoti, D. Finco, and D. Noja, \textit{Ground state and orbital stability for the NLS equation on a general starlike graph with potentials}, Nonlinearity \textbf{30} (2017) 3271--3303.

\bibitem{Caz03} T. Cazenave, \textit{Semilinear Schr\"odinger equations}, Courant Lecture Notes in Mathematics, 10. New York University, Courant Institute of Mathematical Sciences, New York; American Mathematical Society, Providence, RI, 2003. xiv+323 pp. MR2002047

\bibitem{Din20} V. D. Dinh, \textit{On nonlinear Schr\"odinger equations attractive inverse-power potentials}, preprint, arXiv: 1903.04636

\bibitem{Din21} V. D. Dinh, \textit{On nonlinear Schr\"odinger equations with repulsive inverse-power potentials}, Acta Appl. Math. \textbf{171} (2021), Paper No. 14, 52 pp. MR4198524

%\bibitem{Din18} V. D. Dinh, \textit{Global existence and blowup for a class of the focusing nonlinear Schr\"odinger equation with inverse-square potential}, J. Math. Anal. Appl. \textbf{468} (2018), no. 1, 270--303. MR3848986

%\bibitem{Fuk21} N. Fukaya, \textit{Uniqueness and nondegeneracy of ground states for nonlinear Schr\"odinger equations with attractive inverse-power potential}, Commun. Pure Appl. Anal. \textbf{20} (2021), no. 1, 121--143. MR4191499

\bibitem{FukOhtOza08} R. Fukuizumi, M. Ohta, and T. Ozawa, \textit{Nonlinear Schr\"odinger equation with a point defect}, Ann. Inst. H. Poincar\'e Anal. Non Lin\'eaire \textbf{25} (5) (2008) 837--845.

\bibitem{GinVel78} J. Ginibre and G. Velo, \textit{On a class of nonlinear Schr\"odinger equations}, \Rnum{3}. Special theories in dimensions 1, 2 and 3. Ann. Inst. H. Poincar\'e Sect. A (N.S.) \textbf{28} (1978), no. 3, 287--316. MR0498408

\bibitem{Gla77} R. T. Glassey, \textit{On the blowing up of solutions to the Cauchy problem for nonlinear Schr\"odinger equations}, J. Math. Phys. \textbf{18} (1977), no. 9, 1794--1797. MR0460850

\bibitem{Gol22} N. Goloshchapova, \textit{Dynamical and variational properties of the NLS-$\delta_s'$ equation on the star graph}, J. Differential Equations \textbf{310} (2022), 1--44.

\bibitem{GolOht20} N. Goloshchapova and M. Ohta, \textit{Blow-up and strong instability of standing waves for the NLS-$\delta$ equation on a star graph}, Nonlinear Anal. \textbf{196} (2020) 111753.

\bibitem{GooHolWei04} R. H. Goodman, P. J. Holmes, and M. I. Weinstein, \textit{Strong NLS solition-defect interactions}, Physica D \textbf{192} (2004) 215--248.

\bibitem{Ike21} M. Ikeda, \textit{Global dynamics below the ground state for the focusing semilinear Schr\"odinger equation with a linear potential}, J. Math. Anal. Appl. \textbf{503} (2021), no. 1, Paper No. 125291, 63 pp. MR4256194

\bibitem{IkeInu17} M. Ikeda and T. Inui, \textit{Global dynamics below the standing waves for the focusing semilinear Schr\"odinger equation with a repulsive Dirac delta potential}, Anal. PDE \textbf{10} (2017), no. 2, 481--512. MR3619878

%\bibitem{KalSchWalWus75} H. Kalf, U. W. Schmincke, J. Walter, and R. W\"ust, \textit{On the spectral theory of Schr\"odinger and Dirac operators with strongly singular potentials}, Spectral theory and differential equations, pp. 182--226, Lecture Notes in Math., Vol. 448, Springer, Berlin, 1975. MR0397192

\bibitem{Kat87} T. Kato, \textit{On nonlinear Sch\"odinger equations}, Ann. Inst. H. Poincar\'e Phys. Th\'eor. \textbf{46} (1987), no. 1, 113--129. MR0877998

\bibitem{KosSch06} V. Kostrykin and R. Schrader, \textit{Laplacians on metric graphs: eigenvalues, resolvents and semigroups}, Quantum graphs and their applications, Contemp. Math. vol. \textbf{415}, Amer. Math. Soc., Providence, RI, 2006, pp. 201--225, MR2277618

%\bibitem{Naw94} H. Nawa, \textit{Asymptotic profiles of blow-up solutions of the nonlinear Schr\"odinger equation with critical power nonlinearity}, J. Math. Soc. Japan \textbf{46} (1994), no. 4, 557--586. MR1291107

\bibitem{OgaTsu91} T. Ogawa and Y. Tsutsumi, \textit{Blow-up of $H^1$ solutions for the one-dimensional nonlinear Schr\"odinger equation with critical power nonlinearity}, Proc. Amer. Math. Soc. \textbf{111} (1991), no. 2, 487--496. MR1045145

%\bibitem{OgaTsu91} T. Ogawa and Y. Tsutsumi, \textit{Blow-up of $H^1$ solution for the nonlinear Schr\"odinger equation}, J. Differential Equations \textbf{92} (1991), no. 2, 317--330. MR1120908

%\bibitem{Str77} W. Strauss, \textit{Existence of solitary waves in higher dimensions}, Comm. Math. Phys. \textbf{55} (1977), no. 2, 149--162. MR0454365

\bibitem{TaoVisZha07} T. Tao, M. Visan, and X. Zhang, \textit{The nonlinear Schr\"odinger equation with combined power-type nonlinearities}, Comm. Partial Differential Equations \textbf{32} (2007), no. 7-9, 1281--1343. MR2354495

%\bibitem{Wei85} M. I. Weinstein, \textit{Modulational stability of ground states of nonlinear Schr\"odinger equations}, SIAM J. Math. Anal. \textbf{16} (1985), no. 3, 472--491. MR0783974
\end{thebibliography}
\end{document}